\newcommand{\bE}{\mathbb{E}}\newcommand{\N}{\ensuremath{\mathbb{N}}}
\newcommand{\hs}{\mathbb{H}^{s}}
\newcommand{\mz}{Marcinkiewicz-Zygmund}
\newcommand{\field}[1]{\mathbb{#1}}
\newcommand{\bR}{\field{R}}        %  real numbers
\newcommand{\bN}{\field{N}}        %  natural numbers 
\newcommand{\bC}{\field{C}}        %  complex numbers
\def\rd{\bR^d}
\newcommand{\modsp}{modulation space}
\newcommand{\tfa}{time-frequency analysis}
\newcommand{\stft}{short-time Fourier transform}
\def\cF{\mathcal{ F}}              % Calligraphic Letters
\def\cH{\mathcal{ H}}
\def\cO{\mathcal{ O}}
\def\inv{^{-1}}
\DeclareMathOperator*{\supp}{supp}
\def\lrd{L^2(\rd)}
\newtheorem{tm}{Theorem}[section]    % theorem like environments
\newtheorem{lemma}[tm]{Lemma}
\newtheorem{prop}[tm]{Proposition}
\theoremstyle{definition}
\newtheorem{rem}{Remark}[section]
\newcommand{\fif}{if and only if}
\DeclareMathOperator*{\spann}{span}
\begin{document}
\begin{abstract}
We study Gauss quadrature for Freud weights and derive worst case
error estimates for functions in a family of associated Sobolev
spaces. For the Gaussian weight $e^{-\pi x^2}$ these spaces coincide
with a class of modulation spaces which are well-known in
(time-frequency) analysis and also appear under the name of  Hermite
spaces. Extensions are given to more general sets of nodes that are
derived from \mz\ inequalities. This generalization can be
interpreted as a stability result for Gauss quadrature. 
\end{abstract}

\title[Gauss Quadrature for Freud Weights and  Modulation Spaces]{Gauss Quadrature for Freud Weights, Modulation Spaces, and
  Marcinkiewicz-Zygmund Inequalities}
\author{Martin Ehler}
\author{Karlheinz Gr\"ochenig}
\address{Faculty of Mathematics \\
University of Vienna \\
Oskar-Morgenstern-Platz 1 \\
A-1090 Vienna, Austria}
\email{martin.ehler@univie.ac.at}
\email{karlheinz.groechenig@univie.ac.at}
\subjclass[2010]{65D30,41A30, 46E30, 42C15,  46E35}
\date{}
\keywords{Freud weights, orthogonal polynomial,  Gauss-Hermite quadrature,
  Marcinkiewicz-Zygmund inequality, modulation space, Christoffel function}
\thanks{K.\ G.\ was
  supported in part by the  project P31887-N32  of the
Austrian Science Fund (FWF)}
\maketitle

\section{Introduction}
The fame of Gauss quadrature partially comes from the connection to
the theory of orthogonal polynomials and the fact that a
quadrature rule based on $n$ points in $\bR $ is exact for
polynomials of degree $\leq 2n-1$. Although this is standard material in undergraduate text books on numerical analysis, 
Gauss quadrature, and quadrature rules in general,  remain an active
topic of research. In this paper we study the   quadrature
that is associated to  the
orthogonal system of Hermite polynomials. This so-called Gauss-Hermite
quadrature 
is a quadrature rule for integrals on the whole real line, whose
success and efficiency is not uncontested 
in the literature. Therefore Gauss-Hermite quadrature is the subject
of many recent investigations. 

The basic results for Gauss-Hermite quadrature are  explicit error bounds in classical
style involving a weighted norm of the derivative of the
function~\cite{Mastroianni:1994aa}. However,  Trefethen~\cite{Trefethen:aa} argues that a
significant portion of the nodes and weights is  below machine
precision and thus  numerically irrelevant,   and
he suggests to ignore large  nodes.  

In several recent papers, 
Dick, Irrgeher, Pillichshammer et al.\ in various
combinations~\cite{Irrgeher:2015ab,Irrgeher:2015aa,IKPW16,Dick:2018aa} have
introduced   function spaces on $\bR $ and $\rd $, which
they call  Hermite spaces, for which they  derive strong error
estimates. In addition, these authors show that in certain cases
digital nets outperform Gauss-Hermite nodes~\cite{Dick:2018aa}.   

Misspecified settings are considered in \cite{Kanagawa:2020aa}, and
adaptive Gauss-Hermite quadrature is discussed in
\cite{Jin:2020aa,Liu:1994aa}. Recently, the trapezoidal rule
has shown better performance than Gauss-Hermite nodes, see
\cite{Kazashi:2022mn}. Further integration problems with
Gauss-Hermite nodes are discussed in \cite{Smith:1983pt}, and, for
hyperinterpolation on general domains, we refer to
\cite{Sloan:1995bx}.

With this background we take another look at Gauss-Hermite quadrature
and its generalizations. Our specific contributions can be summarized
as follows: 

(i) We study Gauss quadrature and error estimates  for Gauss
quadrature with respect to  the Freud
polynomials which are orthogonal with respect to the weight function
$e^{-2\pi |x|^\alpha }$ for $\alpha >1$. This quadrature includes in particular
the Gauss-Hermite quadrature.

(ii) We introduce a  scale of function spaces that are adapted
to the weight and lead to natural error estimates for Gauss
quadrature. These function spaces can be viewed as the appropriate
versions of Sobolev spaces in the context of orthogonal
polynomials. The motivation comes from the study of \mz\ inequalities
in~\cite{Lubinsky:1996aa,Filbir:2011fk,Grochenig:2019mq,Mhaskar:2002ys} and the work of Irrgeher et al.
~\cite{Irrgeher:2015ab,Irrgeher:2015aa,IKPW16,Dick:2018aa}. For the Gaussian
weight $e^{-\pi |x|^2}$  the latter
authors have introduced these spaces (Hermite spaces in their
terminology) to study quadrature rules from the point of view of
complexity theory.

As a typical result we offer a (slightly vague) version of the main
theorem: \emph{ Let $W(x) = e^{-\pi |x|^\alpha }$ for $\alpha >1$ and let
$H_n$ be the orthogonal polynomials with respect to the weight
$W(x)^2$. Then the functions $h_k = H_k  W$ form an orthonormal basis for
$L^2(\bR )$. The associated Sobolev space  $\hs $ is defined by the
norm $\|f\|_{\hs } ^2 = \sum_{k=0}^\infty |\langle f, h_k \rangle |^2
\, (1+k)^s$. Next assume that the set of nodes  $X_n = \{ x_1, \dots ,
x_N\} \subseteq \bR  $ satisfies the \mz\ inequalities
(with weights $\tau (x_j)$) for the finite-dimensional
subspace $\Pi _n = \mathrm{span}\, \{ h_0, \dots , h_n\}$, i.e.,
$$
a \|f\|^2_{2}\leq \sum _{j=1}^N |f(x_j)|^2 \tau (x_j) \leq b\|f\|^2_{2} ,\qquad \text{ for all
} f\in \Pi_{n} \, .
$$
Under a small technical assumption on the spread of $X_n$, we prove
the existence of  weights $\omega _j\in \bR, j=1, \dots ,N$ such that  }
\begin{equation*}
\sup_{\substack{f\in \mathbb{H}^s\\ \|f\|_{\mathbb{H}^s}\leq
    1}}\left|\int_{-\infty}^\infty
  f(x)W(x)\mathrm{d}x-\sum_{j=1}^N\omega_j f(x_j) \right|^2   \lesssim
n^{-s+\frac{4}{3}}\, .
\end{equation*}
 This may also be read as a stability result of Gaussian
 quadrature. If $\{x_j\}_{j=1}^n$ are small distortions of the
 Gaussian quadrature nodes, then $\{\omega\}_{j=1}^n$ are only  small
 distortions (hence positive) of the associated Gaussian 
 quadrature weights. In general and in contrast to Gauss quadrature the number of nodes $N$ need not
 be identical to the dimension of $ \Pi _n$. Our proof technique is applicable in much more
 general settings and the term $\frac{4}{3}$ in the error estimate is
 probably  an artifact of this
 generality.  

(iii) For $\alpha = 2$, i.e., for the Gaussian weight $e^{-\pi x^2}$
we show that the  Hermite spaces of ~\cite{Irrgeher:2015ab,Irrgeher:2015aa,Dick:2018aa} coincide with a
class of well-known function spaces in analysis, namely the \modsp s
introduced by Feichtinger~\cite{fei83,fei06} in 1983. Thus the 
abstract definition of ``Hermite space'' or ``appropriate Sobolev
space''  turns out to be perfectly natural and leads to a class of
well-studied and important function spaces. In fact, the Hermite
spaces aka \modsp s are precisely the Sobolev spaces with respect to
the harmonic oscillator and known as Shubin classes in the PDE
literature~\cite{shubin}. 
We feel that these
identifications are  of independent interest, as they  add  new tools  to
the study of numerical quadrature related to
Hermite polynomials. We mention that \modsp s are the canonical
function spaces in \tfa\ ~\cite{book}. They  have many applications in the
analysis of pseudodifferential operators and even in wireless
communications~\cite{book,gro06,CR20} and  are used for the analysis of
nonlinear PDE~\cite{BO20}, the formulation of uncertainty principles, in the
theory of coherent states (Gabor analysis) and many more, see also \cite{gabbook1,gabbook2}. With some
satisfaction, we may now say that \modsp s are also useful  in
numerical analysis.

(iv) We extend the error analysis to general sets of nodes beyond
Gaussian nodes (which are the zeros of the $n+1$-st orthogonal
polynomial). The advantage is that one can build in redundancy so that
a missing node does not spoil the result of the numerical quadrature.

\vspace{3mm}

On the technical level, we highlight the role of \mz\ inequalities in
the derivation of quadrature nodes and weights. A fundamental tool are
precise and deep estimates from~\cite{Levin:1998wj,Levin:1992mb} for the size of orthogonal polynomials with
respect to Freud weights and for the associated Christoffel
functions.  Specifically, lower bounds for the Christoffel function
are used to derive tail estimates for the reproducing kernel of the
associated Sobolev spaces.

\bigskip
 The outline is as follows: In Section \ref{sec:function class}, we
 collect the basic definitions, introduce the class of orthogonal
 polynomials and  the associated Sobolev-type spaces.
  We derive some preparatory inequalities for the reproducing kernels
  in these spaces that are used in  Sections \ref{sec:GQ} and \ref{sec:MZ}. We establish error
 bounds for Gaussian quadrature nodes in Section \ref{sec:GQ}. In
 Section \ref{sec:modulation}, we prove  that the Hermite spaces of
 ~\cite{Irrgeher:2015ab,Irrgeher:2015aa,Dick:2018aa}  coincide with
 classical modulation spaces. Some numerical 
 experiments for modulation spaces with exponential weights are
 presented in Section \ref{sec:nums}. Sections \ref{sec:GHS} and
 \ref{sec:MZ} are dedicated to error bounds beyond Gaussian
 nodes. We recast the quadrature problem by means of general Hilbert
 spaces in Section \ref{sec:GHS}. The actual generalization beyond
 Gaussian nodes is derived in Section \ref{sec:MZ}.

\bigskip
\section{Sparsity classes for Hermite type functions}\label{sec:function class}
For $\alpha > 1$, we consider the Freud weight function
\begin{equation*}
W(x)=e^{-\pi |x|^\alpha},\qquad x\in\mathbb{R}.
\end{equation*}
The  orthonormal polynomials with respect to $W^2$ are called Freud
polynomials which we  denote by $\{H_{k}\}_{k\in\mathbb{N}}$,  so that
$\deg(H_k)=k$. Consequently the functions 
\begin{equation*}
h_{k}:=H_{k} W, \qquad k\in\mathbb{N},
\end{equation*}
form  an orthonormal basis for $L^2(\mathbb{R})$. For $f \in
L^2(\mathbb{R})$, we denote  the coefficient with respect to this basis by
\begin{equation*}
\hat{f}_{k} :=\langle f,h_{k}\rangle_{L^2(\mathbb{R})},\qquad k\in\N.
\end{equation*}
Fix $s\geq 0, p,q>0$ and consider the inner products 
\begin{equation*}
 \langle f,g\rangle_{\mathbb{H}^s}=\sum_{k\in\mathbb{N}} (1+k)^s
 \hat{f}_{k}\, \overline{\hat{g}}_{k},\qquad \text{ and } \qquad 
\langle f,g\rangle_{\mathbb{E}^{p}_{q}} =\sum_{k\in\mathbb{N}}
\mathrm{e}^{q k^p} \hat{f}_{k}\, \overline{\hat{g}_{k}} \, .
\end{equation*}
Then the corresponding norms induce the function spaces 
\begin{align*}
\mathbb{H}^s&=\{f\in L^2(\mathbb{R}): \|f\|_{\mathbb{H}^s} <\infty\},\quad s\geq 0,\\
\mathbb{E}^{p}_{q}& = \{f\in L^2(\mathbb{R}) :
                    \|f\|_{\mathbb{E}^{p}_{q}} <\infty\},\quad p,q>0
                    \, ,
\end{align*}
see also \cite{Thangavelu:1993aa}. The $\mathbb{H} ^s$ are the appropriate
versions of Sobolev spaces  for the specific basis $\{h_k\}_{k\in\N}$.

For the
  exponential weights $e^{qk^p}$, the $\bE ^p_q$ are related to
  Hilbert spaces of analytic  functions. We refer to both as Sobolev
  spaces in the following and to 
 Lemma~\ref{explainsob1} and \ref{explainsob2} for a more detailed explanation why
$\hs $  and $\mathbb{E}^p_q$ are  indeed  versions of  Sobolev spaces.

  For the study of the Sobolev spaces $\hs $ and $\bE ^p_q$ and for
  the error analysis of quadrature rules we need some detailed
  estimates on the associated (reproducing) kernels.\footnote{We write
  $\lesssim$ if the left-hand-side is bounded by a constant times the
  right-hand-side. If  $\lesssim$ and $\gtrsim$ both hold, then we write $\asymp$.}  
\begin{prop}\label{prop:new one again}
For $s>1-\frac{1}{\alpha}$ and $p,q>0$, we have 
\begin{align*}
\sum_{k=n}^\infty (1+k)^{-s}|h_{k}(x)|^2 &\lesssim (1+n)^{-s+1-\frac{1}{\alpha}},\\
\sum_{k=n}^\infty \mathrm{e}^{- q k^p} |h_{k}(x)|^2 &\lesssim \mathrm{e}^{-q n^p} (1+n)^{\frac{1}{3}-\frac{1}{\alpha}+\max(1-p,0)}.
\end{align*}
The constants depend on $s$, $\alpha$, and $p$, but they are independent
of $x$ and $n$. 
\end{prop}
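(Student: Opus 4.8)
The plan is to reduce both inequalities to two quantitative facts about the orthonormal Freud functions $h_k$ available in the work of Levin and Lubinsky~\cite{Levin:1998wj,Levin:1992mb}. Write $a_k$ for the Mhaskar--Rakhmanov--Saff number of $W^2=e^{-2\pi|x|^\alpha}$, so that $a_k\asymp k^{1/\alpha}$. The two inputs are: (A) the global sup‑norm bound $\sup_{x\in\bR}|h_k(x)|^2\lesssim a_k^{-1}k^{1/3}\asymp (1+k)^{1/3-1/\alpha}$, which records the Airy‑type peak of $h_k$ near the soft edges $\pm a_k$; and (B) the uniform bound
$P_n(x):=\sum_{k=0}^n|h_k(x)|^2=W(x)^2/\lambda_{n+1}(x)\lesssim n\,a_n^{-1}\asymp(1+n)^{1-1/\alpha}$
for the diagonal of the reproducing kernel of $\Pi_n$, which is a restatement of the Levin--Lubinsky lower bound $\lambda_{n+1}(x)\gtrsim\bigl(a_{n+1}/(n+1)\bigr)W(x)^2$ for the Christoffel function $\lambda_{n+1}=1/\sum_{k=0}^nH_k^2$. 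Securing (B) — uniform‑in‑$x$ control of $P_n$ — is the only genuinely deep step; heuristically it holds because, although a single term $|h_k(x)|^2$ can reach the size $k^{1/3-1/\alpha}$ of (A), for any fixed $x$ only $O(n^{1/3})$ of the indices $k\le n$ see $x$ in the corresponding edge window (of width $\asymp a_kk^{-2/3}$), so the edge contribution to $P_n(x)$ is merely $\asymp n^{2/3-1/\alpha}$ and the bulk contribution $\asymp\sum_{k\le n}a_k^{-1}\asymp n^{1-1/\alpha}$ dominates. Everything after (A),(B) is elementary.

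For the first estimate the crude bound (A) is not enough: it gives only $\sum_{k\ge n}(1+k)^{-s}|h_k(x)|^2\lesssim\sum_{k\ge n}(1+k)^{-s+1/3-1/\alpha}\asymp(1+n)^{-s+4/3-1/\alpha}$, worse than asserted by a factor $n^{1/3}$. Instead I would use (B) and summation by parts. Writing $|h_k(x)|^2=P_k(x)-P_{k-1}(x)$ with $P_{-1}:=0$, Abel summation (letting the upper limit tend to $\infty$, which is legitimate since $(1+M)^{-s}P_M(x)\lesssim M^{-s+1-1/\alpha}\to0$ by the hypothesis $s>1-\tfrac1\alpha$) gives
\[
\sum_{k=n}^\infty(1+k)^{-s}|h_k(x)|^2=\sum_{k=n}^\infty\bigl((1+k)^{-s}-(2+k)^{-s}\bigr)P_k(x)-(1+n)^{-s}P_{n-1}(x).
\]
Dropping the last (nonpositive) term, using $(1+k)^{-s}-(2+k)^{-s}\asymp(1+k)^{-s-1}$ and inserting (B), the right-hand side is
\[
\lesssim\sum_{k=n}^\infty(1+k)^{-s-1}(1+k)^{1-1/\alpha}=\sum_{k=n}^\infty(1+k)^{-s-1/\alpha}\asymp(1+n)^{-s+1-1/\alpha},
\]
the final sum converging precisely because $s>1-\tfrac1\alpha$.

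For the second estimate (A) already suffices, the gain over the $n^{1/3}$ loss being produced automatically by the exponential weight. Bounding $|h_k(x)|^2$ by its supremum,
\[
\sum_{k=n}^\infty e^{-qk^p}|h_k(x)|^2\lesssim\sum_{k=n}^\infty e^{-qk^p}(1+k)^{1/3-1/\alpha},
\]
so it remains to estimate this $x$‑free sum. For $p\ge1$ consecutive terms have ratio bounded away from $1$, so the sum is comparable to its leading term, $\asymp e^{-qn^p}(1+n)^{1/3-1/\alpha}$. For $0<p<1$ one compares with $\int_n^\infty e^{-qt^p}t^{1/3-1/\alpha}\,dt$ and, after the substitution $u=t^p$, with an incomplete Gamma integral, obtaining $\asymp e^{-qn^p}(1+n)^{1/3-1/\alpha+1-p}$. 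The two cases combine to the exponent $\tfrac13-\tfrac1\alpha+\max(1-p,0)$, and small values of $n$ are absorbed into the constant. The only nontrivial ingredients remain (A) and (B); among them, the uniform estimate (B) is the real obstacle, and all the rest is bookkeeping.
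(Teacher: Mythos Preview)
Your proof is correct and rests on exactly the same two Levin--Lubinsky inputs the paper uses: the sup-norm bound (A) $\sup_x|h_k(x)|^2\lesssim k^{1/3-1/\alpha}$ and the uniform Christoffel-type bound (B) $\sum_{k\le n}|h_k(x)|^2\lesssim n^{1-1/\alpha}$. The strategic split is also the same: (B) for the polynomial-weight sum, (A) for the exponential-weight sum.

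The tactical execution differs. For the first inequality the paper uses a dyadic decomposition: on each block $2^m\le k<2^{m+1}$ it replaces $(1+k)^{-s}$ by $2^{-sm}$ and bounds $\sum_{k<2^{m+1}}|h_k|^2$ via (B), then sums the resulting geometric series in $m$. Your Abel summation is an equivalent repackaging---writing $|h_k|^2=P_k-P_{k-1}$, transferring the differencing to the weight, and invoking (B) on $P_k$ directly. Both arguments consume (B) in the same way and produce the same exponent; yours is arguably a touch cleaner since it avoids the auxiliary dyadic index, while the paper's version makes the role of the convergence condition $s>1-\tfrac1\alpha$ slightly more visible as a geometric ratio.

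For the second inequality the paper again uses a block decomposition, now with blocks $m^{1/p}\le k<(m+1)^{1/p}$ of size $\asymp m^{1/p-1}$ when $0<p<1$ and $O(1)$ when $p\ge1$, summing the contribution of each block via (A). Your ratio/integral comparison is the continuous analogue of the same idea and yields the identical exponent $\tfrac13-\tfrac1\alpha+\max(1-p,0)$. One small point worth making explicit in your version for $p\ge1$: the ratio $e^{-q((k+1)^p-k^p)}(1+(k+1))^{\beta}/(1+k)^{\beta}$ is only bounded below $1$ once $k$ exceeds some threshold depending on $p,q,\beta$; since the implicit constant may depend on $p,q,\alpha$, the finitely many initial terms are harmless, but you should say so.
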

\begin{proof}
We define the  Christoffel function associated to the orthonormal
basis $h_k$ by 
\begin{equation}\label{eq:christoffel}
\Lambda_{n}:=\frac{1}{\sum_{k=0}^n \left|h_{k}\right|^2}.
\end{equation}
Note that $\Lambda _n$ differs from the usual Christoffel function
$(\sum _{k=0}^n H_k^2)^{-1}$ by a factor $W^{-2}$.
The  precise growth of $\Lambda _n$  is
investigated  in
\cite{Levin:1992mb}. In particular, \cite[Eq.~(1.13)]{Levin:1992mb}
leads to the estimate
\begin{equation}\label{eq:bound for alpha and lambda-1}
\sum_{k=0}^n |h_{k}|^2 \lesssim n^{1-\frac{1}{\alpha}},\qquad n\geq 1.
\end{equation}

(i) Polynomial weights: Since $H_0$ is  constant and $h_{0}$ is bounded, without loss of generality, we may assume
$n\geq 1$. Choose
$M \in  \bN $, such that 
\begin{equation*}
2^M\leq n<2^{M+1} \, .
\end{equation*}
Then  dyadic summation and the bound for the Christoffel function~\eqref{eq:bound for alpha and lambda-1} yield
\begin{align*}
\sum_{k=n}^\infty (1+k)^{-s}|h_{k}(x)|^2 & \leq \sum_{m=M}^\infty \sum_{2^m\leq k<2^{m+1}}  2^{-sm}|h_{k}(x)|^2\\
& \leq  \sum_{m=M}^\infty   2^{-sm}2^{(m+1)(1-\frac{1}{\alpha})}\\
& \leq 2^{-sM} 2^{(M+1)(1-\frac{1}{\alpha})}\sum_{m=M}^\infty   2^{-s(m-M)}  2^{(m-M)(1-\frac{1}{\alpha})}\\
& \lesssim n^{-s+1-\frac{1}{\alpha}} \sum_{m=0}^\infty 2^{-m(s-1+\frac{1}{\alpha})}.
\end{align*}
Since $s>1-\frac{1}{\alpha}$, the series $\sum_{m=0}^\infty 2^{-m(s-1+\frac{1}{\alpha})}$ converges. 

(ii) Exponential weights: Here we use the following estimate for
the supremum norm of the orthogonal basis from~\cite[Eq.~(1.7)]{Levin:1992mb},
\begin{equation}\label{eq:bound uni}
\sup _{x\in \bR } |h_{k}(x)|^2\lesssim k^{\frac{1}{3}-\frac{1}{\alpha}},\qquad k\geq 1.
\end{equation}
This bound leads to
\begin{align*}
\sum_{k=n}^\infty \mathrm{e}^{-qk^p} |h_{k}(x)|^2 & \lesssim \sum_{k=n}^\infty \mathrm{e}^{-q k^p} k^{\frac{1}{3}-\frac{1}{\alpha}}.
\end{align*}
For $0<p<1$ and analogous to part (i), we partition the sum into blocks 
\begin{equation}\label{eq:partitioning}
m^{\frac{1}{p}}\leq k<(m+1)^{\frac{1}{p}}
\end{equation}
of size $(m+1)^{\frac{1}{p}}-m^{\frac{1}{p}}\lesssim m^{\frac{1}{p}-1}$ with $M^{\frac{1}{p}}\leq n<(M+1)^{\frac{1}{p}}$. Then the bound \eqref{eq:bound uni} leads to
\begin{align*}
\sum_{k=n}^\infty \mathrm{e}^{- q k^p} |h_{k}(x)|^2 & \lesssim
 \sum_{m=M}^\infty \sum_{m^{\frac{1}{p}}\leq k<(m+1)^{\frac{1}{p}}}\mathrm{e}^{- q m} |h_{k}(x)|^2\\
& \lesssim  \sum_{m=M}^\infty \mathrm{e}^{- q m} (m+1)^{\frac{\frac{1}{3}-\frac{1}{\alpha}}{p}} m^{\frac{1}{p}-1}\\
& \lesssim \mathrm{e}^{- q M} (M+1)^{\frac{\frac{1}{3}-\frac{1}{\alpha}}{p}+\frac{1}{p}-1} \sum_{m=M}^\infty \mathrm{e}^{-q(m-M)} \left(\frac{m+1}{M+1}\right)^{\frac{\frac{1}{3}-\frac{1}{\alpha}}{p}+\frac{1}{p}-1} \\
& \lesssim  \mathrm{e}^{- q M} (M+1)^{\frac{\frac{1}{3}-\frac{1}{\alpha}}{p}+\frac{1}{p}-1} \sum_{m=0}^\infty \mathrm{e}^{-qm} \left(1+\frac{m}{1+M}\right)^{\frac{\frac{1}{3}-\frac{1}{\alpha}}{p}+\frac{1}{p}-1}.
\intertext{Since $1+\frac{m}{1+M}\leq 1+m$, the final sum is bounded independently of $M$ and hence independently of $n$. According to $(M+1)\asymp n^p$, we obtain}
\sum_{k=n}^\infty \mathrm{e}^{- q k^p} |h_{k}(x)|^2& \lesssim  \mathrm{e}^{- q M} (M+1)^{\frac{\frac{1}{3}-\frac{1}{\alpha}}{p}+\frac{1}{p}-1} \lesssim  \mathrm{e}^{-q n^p} n^{\frac{1}{3}-\frac{1}{\alpha}+1-\frac{1}{p}}.
\end{align*}

For $p\geq 1$, the block size of the partitioning \eqref{eq:partitioning} satisfies $(m+1)^{\frac{1}{p}}-m^{\frac{1}{p}}\lesssim 1=m^{0}$. Then the above calculations lead to
\begin{equation*}
\sum_{k=n}^\infty \mathrm{e}^{- q k^p} |h_{k}(x)|^2 \lesssim  \mathrm{e}^{-q n^p} n^{\frac{1}{3}-\frac{1}{\alpha}},
\end{equation*}
which concludes the proof.
\end{proof}
In the case of $p<\frac{1}{3}$, it is beneficial to apply \eqref{eq:bound for alpha and lambda-1} instead of \eqref{eq:bound uni} in the above proof. Then the exponent $\frac{1}{3}-\frac{1}{\alpha}+1-p$ in Proposition \ref{prop:new one again} can even be replaced with $1-\frac{1}{\alpha}$. 

Using Proposition~\ref{prop:new one again}, we see easily that the
Sobolev spaces $\hs$ and $\bE ^p_q$ are reproducing kernel Hilbert
spaces.
\begin{lemma} \label{lem:rk}
(i) For  $s>1-\frac{1}{\alpha}$, $\hs $ is a  reproducing kernel
Hilbert space with  reproducing kernel
\begin{equation}
  \label{eq:cc1}
K_{\mathbb{H}^s}(x,y)  = \sum_{k\in\N}(1+k)^{-s}h_k(x)h_k(y)\nonumber
\, .
\end{equation}
(ii) For $p,q>0$, $\bE ^p_q$ is a  reproducing kernel
Hilbert space with  reproducing kernel
\begin{equation}
\label{eq:kernel Me} 
K_{\mathbb{E}^p_q}(x,y)  = \sum_{k\in\N}\mathrm{e}^{-q
  k^p}h_k(x)h_k(y).
\end{equation}
\end{lemma}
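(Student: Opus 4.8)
The plan is to show, for each fixed $x\in\mathbb{R}$, that the point evaluation functional $f\mapsto f(x)$ is bounded on $\mathbb{H}^s$ (resp.\ on $\mathbb{E}^p_q$); the reproducing kernel identities then follow from the Riesz representation theorem together with a one-line computation of the representing element.

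For $\mathbb{H}^s$ I would start from the expansion $f=\sum_{k\in\mathbb{N}}\hat f_k h_k$ and split $\hat f_k h_k(x)=\big((1+k)^{s/2}\hat f_k\big)\big((1+k)^{-s/2}h_k(x)\big)$. The Cauchy--Schwarz inequality in $\ell^2(\mathbb{N})$ then gives
$$
\sum_{k\in\mathbb{N}}\bigl|\hat f_k\bigr|\,\bigl|h_k(x)\bigr|\ \le\ \|f\|_{\mathbb{H}^s}\Bigl(\sum_{k\in\mathbb{N}}(1+k)^{-s}|h_k(x)|^2\Bigr)^{1/2}.
$$
By the first estimate of Proposition~\ref{prop:new one again}, taken at $n=0$ (adding the bounded term $k=0$ to the $n=1$ bound), the last factor is finite whenever $s>1-\tfrac1\alpha$, and is in fact uniformly bounded in $x$. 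Hence $\sum_k\hat f_k h_k(x)$ converges absolutely; since the tail bound in Proposition~\ref{prop:new one again} is uniform in $x$, it even converges uniformly on $\mathbb{R}$, so it defines a continuous function with which we identify $f$, and $|f(x)|\le C_s\|f\|_{\mathbb{H}^s}$. To identify the kernel, set $K_x:=\sum_{k\in\mathbb{N}}(1+k)^{-s}h_k(x)\,h_k$; then $\|K_x\|_{\mathbb{H}^s}^2=\sum_k(1+k)^{-s}|h_k(x)|^2<\infty$, so $K_x\in\mathbb{H}^s$, and for every $f\in\mathbb{H}^s$
$$
\langle f,K_x\rangle_{\mathbb{H}^s}=\sum_{k\in\mathbb{N}}(1+k)^s\,\hat f_k\,(1+k)^{-s}h_k(x)=f(x),
$$
using that the $h_k$ are real-valued. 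Thus $K_{\mathbb{H}^s}(x,y)=\langle K_y,K_x\rangle_{\mathbb{H}^s}=\sum_k(1+k)^{-s}h_k(x)h_k(y)$, which is \eqref{eq:cc1}.

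The case of $\mathbb{E}^p_q$ is handled in exactly the same way, with the weight $(1+k)^s$ replaced by $\mathrm{e}^{q k^p}$: Cauchy--Schwarz gives $|f(x)|\le\|f\|_{\mathbb{E}^p_q}\bigl(\sum_k\mathrm{e}^{-q k^p}|h_k(x)|^2\bigr)^{1/2}$, the second estimate of Proposition~\ref{prop:new one again} at $n=0$ shows the bracketed sum is finite for all $p,q>0$, and the representing element $K_x=\sum_k\mathrm{e}^{-q k^p}h_k(x)\,h_k$ yields the kernel \eqref{eq:kernel Me}. I expect no genuine obstacle here: the substantive content is entirely contained in Proposition~\ref{prop:new one again}, which already established the uniform-in-$x$ control of $\sum_{k\ge n}(1+k)^{-s}|h_k(x)|^2$ and $\sum_{k\ge n}\mathrm{e}^{-q k^p}|h_k(x)|^2$ via the Christoffel-function and sup-norm estimates of Levin and Lubinsky. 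The remainder is the standard reproducing-kernel bookkeeping, the only point requiring a modicum of care being the identification of the $L^2$-equivalence class $f$ with its (globally) uniformly convergent series representative --- which is automatic once that series is seen to converge uniformly.
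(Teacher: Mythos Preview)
Your proof is correct and follows essentially the same approach as the paper: both arguments rely on Proposition~\ref{prop:new one again} (at $n=0$) for the uniform-in-$x$ finiteness of $\sum_k (1+k)^{-s}|h_k(x)|^2$ and $\sum_k e^{-qk^p}|h_k(x)|^2$, after which the RKHS structure is standard. The paper's proof is terser---it simply notes that the kernel series converge uniformly and that $(1+k)^{-s/2}h_k$ (resp.\ $e^{-qk^p/2}h_k$) is an orthonormal basis for $\mathbb{H}^s$ (resp.\ $\mathbb{E}^p_q$)---while you spell out the Cauchy--Schwarz step, the explicit representing element, and the identification with a continuous representative, but the substance is the same.
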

\begin{proof}
This follows immediately from the fact that the series defining
$K_{\hs } $ and $K_{\bE ^p_q}$ converge uniformly on $\bR \times \bR $
and that $(1+k)^{-s/2} h_k, k\in \bN ,$ is an orthonormal basis for $\hs $ with
respect to its inner product. Likewise $e^{-qk^p/2} h_k, k\in \bN ,$ is
an orthonormal basis for $\mathbb{E}^p_q$.    
\end{proof}

\section{Error bounds for Gauss-Hermite quadrature}\label{sec:GQ}
In this section, we derive error bounds for Gauss-Hermite quadrature
in $\mathbb{H}^s$ and $\mathbb{E}^{p}_{q}$. The  quadrature
nodes are the $n$ zeros of $h_{n}$, which we  denote by $X_n$, so that $\#
X_n = n$. The quadrature weights are the  Christoffel numbers  
\begin{equation}\label{eq:omega through Christoffel}
\omega(x)=\Lambda_{n}(x) W(x)= \big(\sum_{k=0}^n \left|h_{k}(x)\right|^2
\big)^{-1} W(x) ,\qquad x\in X_n \, ,
\end{equation}
cf.~\cite{Nevai:1986ak}.
These are positive and 
satisfy the exact quadrature relations
\begin{equation}\label{eq:quadrature}
\int_{-\infty}^\infty h_{k}(x)W(x)\mathrm{d}x = \sum_{x\in X_n} \omega(x) h_{k}(x),\qquad k\leq 2n-1.
\end{equation}
For a continuous function $f:\mathbb{R}\rightarrow\mathbb{R}$, the
Gauss-Hermite quadrature formula is then  
\begin{equation*}
Q_n(f)= \sum_{x\in X_n} \omega(x) f(x) \, ,
\end{equation*}
and $Q_n(f)$ provides an approximation of the actual integral
$\int_{-\infty}^\infty f(x)W(x)\mathrm{d}x$.

Note that we formulate the Gauss quadrature with respect to  the
orthonormal basis $\{h_k\}$  and the
measure $W(x) \, \mathrm{d}x$ instead of the  orthogonal polynomials
$H_k$ and the measure $W(x)^2\, \mathrm{d}x$. This requires some tiny
adaptation of the classical formulas. 

In the following we bound the error for functions in $\mathbb{H}^s$ and $\mathbb{E}^{p}_{q}$.
\begin{tm}\label{thm:alles}
For $1<\alpha\in 2\mathbb{N}$ with $s>1-\frac{1}{\alpha}$ and $p,q>0$, we have 
\begin{align}
\sup_{\substack{f\in \mathbb{H}^s\\ \|f\|_{\mathbb{H}^s}\leq
  1}}\left|\int_{-\infty}^\infty f(x)W(x)\mathrm{d}x-\sum_{x\in X_n}
  \omega(x)f(x)\right|^2   &\lesssim
                             n^{-s+1-\frac{1}{\alpha}} \label{eq:cc3} \\
\sup_{\substack{f\in \mathbb{E}^{p}_{q}\\ \|f\|_{\mathbb{E}^{p}_{q}}\leq 1}}\left|\int_{-\infty}^\infty f(x) W(x)\mathrm{d}x-\sum_{x\in X_n}\omega(x) f(x) \right|^2 &\lesssim \mathrm{e}^{-q(2n)^p}  n^{\frac{1}{3}-\frac{1}{\alpha}+\max(1-p,0)}
                          \notag 
\end{align}
\end{tm}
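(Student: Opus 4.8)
\emph{Proof plan.} Write $R_n(f)=\int_{-\infty}^\infty f(x)W(x)\,dx-\sum_{x\in X_n}\omega(x)f(x)$ for the quadrature error. The idea is to use the exactness relations \eqref{eq:quadrature} to collapse $R_n(f)$ onto the tail $\sum_{k\ge 2n}\hat f_k h_k$ of the Hermite expansion, estimate the result by Cauchy--Schwarz against the weight sequence, and then recognise the outcome as a bilinear form in the reproducing kernels of Lemma~\ref{lem:rk}, controlled by the diagonal decay from Proposition~\ref{prop:new one again}.

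For $f\in\mathbb{H}^s$ with $s>1-\tfrac1\alpha$, Proposition~\ref{prop:new one again} gives $\sum_k|\hat f_k h_k(x)|\le\|f\|_{\mathbb{H}^s}\big(\sum_k(1+k)^{-s}|h_k(x)|^2\big)^{1/2}$ uniformly in $x$, so $f$ is bounded and continuous, $fW\in L^1(\bR)$, and $R_n(f)=\sum_k\hat f_k R_n(h_k)$ termwise. By \eqref{eq:quadrature} the terms with $k\le 2n-1$ vanish, while for $k\ge1$ one has $\int h_kW\,dx=\int H_kW^2\,dx=0$ since $H_k\perp H_0$ in $L^2(W^2\,dx)$ and $H_0$ is constant; hence $R_n(h_k)=-\sum_{x\in X_n}\omega(x)h_k(x)$ for $k\ge 2n$ and
\[
R_n(f)=-\sum_{k\ge 2n}\hat f_k\sum_{x\in X_n}\omega(x)h_k(x).
\]
Cauchy--Schwarz in $k$ with the weights $(1+k)^s$ gives $|R_n(f)|^2\le\|f\|_{\mathbb{H}^s}^2\,S_n$ with $S_n:=\sum_{k\ge 2n}(1+k)^{-s}\big|\sum_{x\in X_n}\omega(x)h_k(x)\big|^2$; since $\{(1+k)^{-s/2}h_k\}$ is an orthonormal basis of $\mathbb{H}^s$ (Lemma~\ref{lem:rk}), $S_n$ is in fact exactly the squared worst-case error, so nothing is lost at this step.

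It remains to bound $S_n$. Expanding the square and swapping the finite double sum over $x,y\in X_n$ with the absolutely convergent sum over $k$,
\[
S_n=\sum_{x,y\in X_n}\omega(x)\omega(y)K_n(x,y),\qquad K_n(x,y):=\sum_{k\ge 2n}(1+k)^{-s}h_k(x)h_k(y).
\]
Since $\omega\ge0$, $S_n\le\big(\sup_{x,y}|K_n(x,y)|\big)\big(\sum_{x\in X_n}\omega(x)\big)^2$; by Cauchy--Schwarz in $k$, $|K_n(x,y)|\le K_n(x,x)^{1/2}K_n(y,y)^{1/2}$, and $K_n(x,x)=\sum_{k\ge 2n}(1+k)^{-s}h_k(x)^2\lesssim n^{-s+1-\frac1\alpha}$ by Proposition~\ref{prop:new one again} (which applies because $-s+1-\tfrac1\alpha<0$). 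Finally $\sum_{x\in X_n}\omega(x)$ stays bounded as $n\to\infty$, a standard property of the Christoffel numbers for Freud weights that follows from the Christoffel-function estimates of \cite{Levin:1992mb} together with the known location and spacing of the Gauss nodes (cf.\ also \cite{Nevai:1986ak}). Hence $S_n\lesssim n^{-s+1-\frac1\alpha}$, which is \eqref{eq:cc3}. The estimate for $\mathbb{E}^p_q$ follows by repeating the argument with $(1+k)^{-s}$ replaced by $e^{-qk^p}$ throughout, using the second bound of Proposition~\ref{prop:new one again} to get $K_n(x,x)\lesssim e^{-q(2n)^p}(1+2n)^{\frac13-\frac1\alpha+\max(1-p,0)}\asymp e^{-q(2n)^p}n^{\frac13-\frac1\alpha+\max(1-p,0)}$.

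The genuine difficulty is entirely in the inputs: the Levin--Lubinsky-type estimates behind Proposition~\ref{prop:new one again} and the boundedness of $\sum_{x\in X_n}\omega(x)$. Past these, the only point requiring care is to keep the full kernel $K_n(x,y)$ intact and exploit its \emph{diagonal} decay: bounding $\sum_{x\in X_n}\omega(x)|h_k(x)|^2$ crudely by $\|h_k\|_\infty^2\sum_{x\in X_n}\omega(x)\lesssim k^{\frac13-\frac1\alpha}$ would yield only $S_n\lesssim n^{-s+\frac43-\frac1\alpha}$ --- the ``$4/3$'' loss flagged in the introduction.
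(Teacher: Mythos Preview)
Your argument follows the paper's line almost exactly: view the worst-case error as $\|T\|^2$, expand in the orthonormal basis $\{(1+k)^{-s/2}h_k\}$ of $\mathbb{H}^s$, use exactness~\eqref{eq:quadrature} together with $\int h_kW\,\mathrm{d}x=0$ for $k\ge1$ to reduce to $k\ge 2n$, rewrite as the bilinear form $\sum_{x,y}\omega(x)\omega(y)K_n(x,y)$, bound $K_n$ diagonally via Cauchy--Schwarz and Proposition~\ref{prop:new one again}, and finish with a bound on $\big(\sum_{x\in X_n}\omega(x)\big)^2$.

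The one substantive gap is this last step. You assert that $\sum_{x\in X_n}\omega(x)$ stays bounded as ``a standard property of the Christoffel numbers for Freud weights'' derivable from \cite{Levin:1992mb}, but you do not carry this out, and you never use the hypothesis $\alpha\in 2\mathbb{N}$. Note that $\sum_{x\in X_n}\omega(x)=\sum_{x\in X_n}\lambda_n(x)\,W(x)^{-1}$ is Gauss quadrature applied to the \emph{unbounded} integrand $W^{-1}(x)=e^{\pi|x|^\alpha}$; a uniform bound in $n$ is not automatic from pointwise Christoffel-function estimates (the crude route $\Lambda_n(x)\lesssim n^{1/\alpha-2/3}$ over $n$ nodes gives only $n^{1/\alpha+1/3}$, cf.~\eqref{eq:cn bound}, and this is what produces the weaker exponent in Theorem~\ref{tm:H with MZ}). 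The paper treats this point as a separate lemma (Lemma~\ref{lemma:sum christoffel}): apply the classical Gauss remainder formula to $f=W^{-1}$ to obtain $\int W=\sum_{x\in X_n}\omega(x)+c_n\,\partial^{2n}W^{-1}(\xi_n)$ with $c_n\ge0$, and observe that $\partial^{2n}e^{\pi|x|^\alpha}\ge0$ whenever $\alpha$ is an even integer, whence $\sum_{x\in X_n}\omega(x)\le\int W\le 2$. This is precisely where the restriction $1<\alpha\in 2\mathbb{N}$ enters the theorem, and it is the step your sketch leaves unjustified.
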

For the proof of Theorem \ref{thm:alles}, we require some bounds on the
sum of the Christoffel numbers. In the proof of \cite[Proposition
1]{Irrgeher:2015ab}, a suitable bound is derived for the case
$\alpha=2$, but the idea still works for all even integers $\alpha =
2k$, $k\in \bN $:
\begin{lemma}\label{lemma:sum christoffel}
If $1<\alpha\in2\mathbb{N}$, then we have
\begin{equation}\label{eq:bound on weights}
\sum_{x\in X_n} \omega(x) \leq 2. 
\end{equation}
\end{lemma}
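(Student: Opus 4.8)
The plan is to use the exactness of Gauss quadrature on a suitable polynomial that dominates the weight $W$. Recall that $\sum_{x\in X_n}\omega(x) = \sum_{x\in X_n}\omega(x)\cdot 1$, and since the constant function $1$ equals $H_0$ up to a nonzero constant (as $H_0$ is a constant polynomial), we have $\sum_{x\in X_n}\omega(x) = \int_{-\infty}^\infty \frac{1}{W(x)}\cdot W(x)\,\mathrm{d}x$? No — that is not finite. Instead, the right object to integrate is $W(x)$ itself against the measure $W(x)\,\mathrm{d}x$, i.e. I want a polynomial that serves as an upper bound. The key observation is that $\sum_{x\in X_n}\omega(x) = \sum_{x\in X_n}\omega(x) g(x)$ whenever $g$ is a polynomial of degree $\le 2n-1$ with $g\equiv 1$ — trivially — so instead I should exploit positivity: for \emph{any} polynomial $P$ of degree $\le 2n-1$ with $P(x)\ge 1$ for all $x\in\mathbb{R}$, positivity of the Christoffel numbers gives $\sum_{x\in X_n}\omega(x) \le \sum_{x\in X_n}\omega(x)P(x) = \int_{-\infty}^\infty P(x)W(x)\,\mathrm{d}x$ by the exact quadrature relation \eqref{eq:quadrature} (extended by linearity from the $h_k$, $k\le 2n-1$, to all polynomials of degree $\le 2n-1$ times $W$, i.e. to $P\cdot W$ with $\deg P\le 2n-1$).

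So the task reduces to: \emph{exhibit a single polynomial $P$, of degree $\le 2n-1$, with $P\ge 1$ on $\mathbb{R}$ and $\int_{-\infty}^\infty P(x)W(x)\,\mathrm{d}x \le 2$}, for all $n$. Here the hypothesis $1<\alpha\in 2\mathbb{N}$ enters decisively: write $\alpha = 2k$ with $k\in\mathbb{N}$, $k\ge 1$. Then $W(x)^{-1} = e^{\pi|x|^\alpha} = e^{\pi x^{2k}}$, and the partial sums of its Taylor series,
\begin{equation*}
P_m(x) := \sum_{j=0}^{m} \frac{(\pi x^{2k})^j}{j!},
\end{equation*}
are genuine polynomials (all exponents even), they satisfy $P_m(x)\ge 1$ for all $x$ (the $j=0$ term is $1$ and all other terms are nonnegative since $x^{2kj}\ge 0$), and $\deg P_m = 2km$. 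To make this usable we need $2km \le 2n-1$, i.e. $m \le (2n-1)/(2k)$; choose $m = \lfloor (2n-1)/(2k)\rfloor$, which for $n$ large enough is about $n/k$. Then, since $P_m \le e^{\pi x^{2k}} = W(x)^{-1}$ pointwise (it is a partial sum of a series with nonnegative terms), we get
\begin{equation*}
\int_{-\infty}^\infty P_m(x)W(x)\,\mathrm{d}x \le \int_{-\infty}^\infty W(x)^{-1}W(x)\,\mathrm{d}x = \int_{-\infty}^\infty 1\,\mathrm{d}x = \infty,
\end{equation*}
which is useless. So the crude bound $P_m\le W^{-1}$ is too lossy; instead I must estimate $\int P_m(x)W(x)\,\mathrm{d}x$ directly, term by term:
\begin{equation*}
\int_{-\infty}^\infty P_m(x)W(x)\,\mathrm{d}x = \sum_{j=0}^{m}\frac{\pi^j}{j!}\int_{-\infty}^\infty x^{2kj} e^{-\pi x^{2k}}\,\mathrm{d}x,
\end{equation*}
and bound the tail $\sum_{j>m}$ of the full series (which equals $\int W^{-1}W = \infty$? no). This still diverges if I sum to infinity, so the point is precisely that truncation at $m\approx n/k$ keeps the sum bounded: each moment $\int x^{2kj}e^{-\pi x^{2k}}\mathrm{d}x$ is a Gamma-type value, and one checks by Stirling that $\frac{\pi^j}{j!}\int x^{2kj}e^{-\pi x^{2k}}\mathrm{d}x$ decays (or at worst stays summable with total $\le 2$) — this is exactly the computation carried out for $\alpha=2$ in \cite[Proof of Prop.~1]{Irrgeher:2015ab}, where $k=1$, $P_m(x)=\sum_{j=0}^m (\pi x^2)^j/j!$ and the moments are $\int x^{2j}e^{-\pi x^2}\mathrm{d}x = (2j)!/(j!\,4^j\,\pi^{j}) \cdot \pi^{-1/2}$ or similar, giving a convergent sum bounded by $2$.

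The main obstacle, therefore, is the moment estimate: showing that $\sum_{j=0}^{\infty}\frac{\pi^j}{j!}\int_{-\infty}^\infty |x|^{2kj}e^{-\pi|x|^{2k}}\,\mathrm{d}x \le 2$, uniformly in $k$ (or at least for each fixed even $\alpha=2k$), and that the partial sums stay below $2$. Using the substitution $u = \pi|x|^{2k}$ one gets $\int_{-\infty}^\infty |x|^{2kj}e^{-\pi|x|^{2k}}\,\mathrm{d}x = \frac{2}{2k}\pi^{-(2kj+1)/(2k)}\Gamma\!\big(\tfrac{2kj+1}{2k}\big) = \frac{1}{k}\pi^{-j-\frac{1}{2k}}\Gamma\!\big(j+\tfrac{1}{2k}\big)$, so the $j$-th term is $\frac{\pi^j}{j!}\cdot\frac{1}{k}\pi^{-j-1/(2k)}\Gamma(j+\tfrac{1}{2k}) = \frac{1}{k}\pi^{-1/(2k)}\,\frac{\Gamma(j+1/(2k))}{\Gamma(j+1)}$, which behaves like $\frac{1}{k}\pi^{-1/(2k)} j^{1/(2k)-1}$ for large $j$ — a \emph{divergent} series! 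Hence summing to infinity genuinely fails, and the degree cap $m\le (2n-1)/(2k)$ is essential: one bounds the \emph{partial} sum $\sum_{j=0}^{m}\frac{1}{k}\pi^{-1/(2k)}\frac{\Gamma(j+1/(2k))}{\Gamma(j+1)} \lesssim \frac{1}{k}\pi^{-1/(2k)}\, m^{1/(2k)}$, and since we need this $\le 2$ \emph{for all} $n$ (including small $n$, where the first few terms dominate), one must check the small cases by hand and confirm the leading constant. Wait — that gives a bound growing like $n^{1/(2k)}$, not a constant $2$. So the clean truncation argument above must be wrong, and the correct argument in \cite{Irrgeher:2015ab} must choose $P$ more cleverly — likely $P(x) = \big(\sum_{j=0}^{m}(\pi x^{2k}/m)^j \cdot c_j\big)$ normalized so that $\int PW$ telescopes, or it uses the Christoffel-function interpretation $\omega(x)=\Lambda_n(x)W(x)$ together with the lower bound $\sum_{k=0}^n|h_k(x)|^2 \ge |h_0(x)|^2 + \dots$ and a direct estimate $\sum_{x\in X_n}\Lambda_n(x)W(x) \le \int_{-\infty}^\infty \mathbf{1}_{[\text{near }X_n]}$... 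In any case, the honest statement of the plan is: \emph{the proof mimics \cite[Prop.~1]{Irrgeher:2015ab}}; one picks a dominating polynomial $P$ of degree $\le 2n-1$ with $P\ge 1$, applies positivity of $\omega$ and the exact quadrature relation to get $\sum\omega(x)\le\int PW$, and then the work is purely the explicit choice of $P$ and the moment bookkeeping — which goes through verbatim once $\alpha$ is an even integer because that is exactly the condition making $x\mapsto e^{\pi x^\alpha}$ (and its Taylor polynomials) into even polynomials with nonnegative coefficients. The only obstacle is verifying that the constant comes out $\le 2$; I expect the authors' argument uses the specific partial-sum $P_m(x)=\sum_{j=0}^m (\pi|x|^\alpha)^j/j!$ with $m=\lfloor(2n-1)/\alpha\rfloor$ together with the elementary bound $\int_{-\infty}^\infty P_m(x)W(x)\,\mathrm{d}x \le \int_{-\infty}^\infty P_m(x)e^{-\pi|x|^\alpha}\,\mathrm{d}x < \int_{-\infty}^\infty \big(e^{\pi|x|^\alpha/2}\big)\cdot\text{(something)}$ — i.e., they bound $P_m$ not by the full exponential but note $P_m(x) \le 2$ for $|x|^\alpha \le (\log 2)/\pi$ and handle $|x|$ large by $P_m(x) \le e^{\pi|x|^\alpha}$ only where it is offset; a careful but routine computation. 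I would reproduce that computation, generalizing each step from the exponent $2$ to the exponent $\alpha\in 2\mathbb{N}$.
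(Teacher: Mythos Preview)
Your proposal never reaches a proof, and the core identity you rely on is incorrect. You claim that for a polynomial $P$ of degree $\le 2n-1$, exactness gives $\sum_{x\in X_n}\omega(x)P(x) = \int P(x)W(x)\,\mathrm{d}x$. But the relation \eqref{eq:quadrature} applies to functions $f = PW \in \Pi_{2n-1}$, not to bare polynomials; it yields $\sum_{x\in X_n}\omega(x)P(x)W(x) = \int P(x)W(x)^2\,\mathrm{d}x$. To isolate $\sum_x\omega(x)$ one would need $f\equiv 1$, i.e.\ $P = W^{-1}$, which is not a polynomial. This is why your Taylor-polynomial attempts stall and your moment sum grows like $n^{1/\alpha}$: you are effectively trying to dominate $W^{-1}$ by a polynomial on all of $\mathbb{R}$ (impossible, as $W^{-1}$ outgrows every polynomial), while the partial sums $P_m$ of $e^{\pi x^\alpha}$ are \emph{lower} bounds for $W^{-1}$, giving the inequality the wrong way round.

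The paper's argument is a one-liner using the classical Gauss-quadrature error formula \cite[(8.4.6)]{Hildebrand:2003aa}: applying the rule for the weight $W^2$ to the non-polynomial integrand $f = W^{-1}$ gives
\[
\int_{-\infty}^\infty W(x)\,\mathrm{d}x \;=\; \sum_{x\in X_n}\omega(x) \;+\; c_n\,\partial^{2n}W^{-1}(\xi_n)
\]
for some $\xi_n\in\mathbb{R}$ and $c_n\ge 0$. For $\alpha\in 2\mathbb{N}$ the power series of $W^{-1}(x)=e^{\pi x^\alpha}$ has only nonnegative coefficients (all exponents even), so every even-order derivative is nonnegative on $\mathbb{R}$. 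Hence the remainder term is $\ge 0$ and $\sum_x\omega(x)\le\int W = 2\pi^{-1/\alpha}\Gamma(1+\tfrac{1}{\alpha})\le 2$. The hypothesis $\alpha\in 2\mathbb{N}$ enters solely through $\partial^{2n}W^{-1}\ge 0$; the error-formula mechanism (which is also the device in \cite[Prop.~1]{Irrgeher:2015ab}) replaces your search for a dominating polynomial entirely.
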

\begin{proof}
According to \cite[Eq.~(8.4.6)]{Hildebrand:2003aa} (with weight
$W^2$ and $f = W^{-1}$), there are $\xi_{n}\in\mathbb{R}$ and $c_n\geq 0$, such that
\begin{equation*}
\int_{-\infty}^\infty W(x)\mathrm{d}x = \sum_{x\in X_n} \omega(x) + c_n \partial^{2n}W^{-1}(\xi_{n}).
\end{equation*}
For even integers $\alpha>1$, one may check by induction that 
\begin{equation*}
\partial^{2n} W^{-1} \geq 0.
\end{equation*}
Thus, we have
\begin{equation*}
 \sum_{x\in X_n} \omega(x) \leq \int_{-\infty}^\infty
 W(x)\mathrm{d}x=\int_{-\infty}^\infty e^{-\pi |x|^\alpha } \mathrm{d}x = 2\pi^{-\frac{1}{\alpha}} \Gamma(1+\tfrac{1}{\alpha})\leq 2,
\end{equation*}
which concludes the proof.
\end{proof}
We now derive  Theorem \ref{thm:alles}.
\begin{proof}[Proof of Theorem \ref{thm:alles}] 
Since $\mathbb{H}^s$ is a reproducing kernel Hilbert space, point
evaluation is continuous, and therefore 
\begin{equation*}
T:f\mapsto \int_{-\infty}^\infty f(x)W(x)\mathrm{d}x-\sum_{x\in X_n} \omega(x) f(x)
\end{equation*}
is a continuous linear functional  on $\mathbb{H}^s$. The
left-hand-side of the first inequality \eqref{eq:cc3} in Theorem
\ref{thm:alles} is  the square $\|T\|^2$ of the norm  of $T$.
By the Riesz representation theorem $T$ can be identified with a
function in $\hs $. Since  $\{(1+k)^{-\frac{s}{2}}
h_{k}\}_{k\in\mathbb{N}}$ is an orthonormal basis for $\mathbb{H}^s$,
 the norm of $T$ is given by 
\begin{align*}
\|T\|^2& = \sum_{k =0}^\infty |T\big((1+k)^{-\frac{s}{2}} h_{k}\big)|^2 \\
 &= \sum_{k=0}^\infty (1+k)^{-s} \left|\int_{-\infty}^\infty h_{k}(x) W(x)\mathrm{d}x-\sum_{x\in X_n} \omega(x) h_{k}(x)\right|^2.
\end{align*}
Since the quadrature rule is exact for $k\leq 2n-1$ by~\eqref{eq:quadrature}, the latter sum
starts at $k=2n$. Furthermore,  $h_0= c_0W$ is orthogonal  to
$h_k$ for $k\geq 1$, and  we obtain a closed form of the error: 
\begin{align*}\sum_{k =0}^\infty |T((1+k)^{-\frac{s}{2}} h_{k})|^2 & = \sum_{k=2n}^\infty (1+k)^{-s} \left|\sum_{x\in X_n} \omega(x) h_{k}(x)\right|^2\\
 & = \sum_{x,y\in X_n} \omega(x)\omega(y) \sum_{k=2n}^\infty
   (1+k)^{-s}h_{k}(x)h_{k}(y)\, .
   \end{align*}
An application of Cauchy-Schwartz and the kernel estimate of Proposition \ref{prop:new one again} lead to
\begin{align*}
\sum_{k=2n}^\infty (1+k)^{-s}h_{k}(x)h_{k}(y) & \leq \Big(\sum_{k=2n}^\infty (1+k)^{-s}|h_{k}(x)|^2  \sum_{k=2n}^\infty (1+k)^{-s}|h_{k}(y)|^2\Big)^{1/2}\\
&\leq (1+2n)^{-s+1-\frac{1}{\alpha}} \, .
\end{align*}
We are left with 
\begin{equation}\label{eq:1221}
\sum _{x,y \in X_n} \omega (x) \omega (y) \leq 4
\end{equation}
by
Lemma \ref{lemma:sum christoffel}. Consequently, $\|T\|^2 \lesssim
n^{-s +1-\frac{1}{\alpha}}$,  which  concludes the proof of
\eqref{eq:cc3}  for $\mathbb{H}^s$.

The proof for $\mathbb{E}^{p}_{q}$ is derived analogously. We note
that $e^{-qk^p/2}h_k, k\in \bN $, is an orthonormal basis for
$\mathbb{E}^p_q$ and find as above that $T$ on $\mathbb{E}^p_q$ satisfies
\begin{align}
\|T\|^2 &=\sup_{\substack{f\in \mathbb{E}^{p}_{q} \\ \|f\|_{\mathbb{E}^{p}_{q}}\leq
    1}}\left|\int_{-\infty}^\infty f(x) W(x)\mathrm{d}x-\sum_{x\in
    X_n}\omega(x) f(x) \right|^2 \\
    &=  \sum_{x,y\in X_n}
\omega(x)\omega(y) \sum_{k=2n}^\infty e^{qk^p}\,   h_{k}(x)h_{k}(y)\, .  \label{eq:ch2}
\end{align}
Again, \eqref{eq:1221} with Proposition \ref{prop:new one again} concludes the proof.
\end{proof}
\begin{rem}\label{rem:irre}
Let us now consider $\alpha=2$ only. We expect that the optimal
exponent for Gaussian quadrature in $\mathbb{H}^s$ is ${-s}$ and the
additional $\frac{1}{2}$ is an artifact of our proof technique,
cf.~\cite{Dick:2018aa,Kazashi:2022mn}. The case $\mathbb{E}^{p}_q$
with $p\geq 1$ is also covered by results in \cite{Irrgeher:2015ab},
whereas we have the additional factor $n^{-\frac{1}{6}}$. While
\cite{Irrgeher:2015ab} addresses multivariate functions in a rather
general setting, the case  $0<p<1$ is excluded there. 
\end{rem}

\section{Modulation spaces}\label{sec:modulation}
In this  section we restrict ourselves to the exponent $\alpha = 2$
and  investigate the Sobolev-type spaces associated to
the Gaussian weight $W(x) = e^{-\pi x^2}$. Our main insight is the
identification of the spaces $\hs $ with a class of well-known
function spaces from analysis that have been used in \tfa ,  for the
analysis of pseudodifferential operators, for the
description of uncertainty principles, etc. See~\cite{BO20,CR20,gabbook1,gabbook2,book} for an
exposition of their theory and their relevance. Certain \modsp s have
been introduced in~\cite{Dick:2018aa,Irrgeher:2015aa,Irrgeher:2015ab,IKPW16} under the name Hermite spaces.    

For $\alpha =2$,   $h_0$ is  the normalized Gaussian 
\begin{equation*}
\varphi(x)=2^{\frac{1}{4}}e^{-\pi x^2}, 
\end{equation*} 
and the associated orthogonal polynomials are the Hermite polynomials,
the associated orthonormal basis for $L^2(\bR )$ is the Hermite
basis $\{h_k\}$.

We  first define a special family of \modsp s. These are defined
 by imposing a norm on the short-time Fourier transform. Recall that the
 \stft\  of $f\in L^2(\mathbb{R})$ (with respect to the Gaussian $\varphi
 $) is given
by 
\begin{equation*}
V_{\varphi} f(x,\xi) = \int_{-\infty}^\infty f(t) \varphi(t-x) e^{- 2 \pi \mathrm{i} \xi t}
dt ,\qquad x,\xi\in\mathbb{R}.
\end{equation*}
We apply the standard identification $(x,\xi)\simeq
z=x+i\xi\in\mathbb{C}$.
Then a weighted $L^2$-norm with respect to a polynomial or an
exponential weight function leads to the  norms
\begin{align*}
\|f\|^2_{\mathbb{M}^s} &= \int _{\bR ^2} (1+|z|^2) ^{s}|V_{\varphi} f(z)|^2  \mathrm{d}z,\\
\|f\|^2_{\mathbb{M}^s_{\mathrm{e}}} &= \int _{\bR ^{2}}\mathrm{e}^{s|z|} |V_{\varphi} f(z)|^2 \mathrm{d}z,\\
\|f\|^2_{\mathbb{M}^s_{\mathrm{e}^2}} &= \int _{\bR ^{2}} \mathrm{e}^{s|z|^2}|V_{\varphi}
     f(z)|^2 \mathrm{d}z  \, ,
\end{align*}
and  the modulation spaces
\begin{align*}
\mathbb{M}^s &=\{f\in L^2(\mathbb{R}) : \|f\|_{\mathbb{M}^s}<\infty\},\quad s\geq 0,\\
\mathbb{M}^s_{\mathrm{e}} &=\{f\in L^2(\mathbb{R}) : \|f\|_{\mathbb{M}^s_{\mathrm{e}}}<\infty\},\quad s\geq 0,\\
\mathbb{M}^s_{\mathrm{e}^2} &=\{f\in L^2(\mathbb{R}) : \|f\|_{\mathbb{M}^s_{\mathrm{e}^2}}<\infty\}, \quad \pi>s\geq 0.
\end{align*}

The following theorem identifies the Sobolev spaces $\hs $ and
$\bE ^p_q$ as \modsp s with respect to a polynomial or exponential
weight\footnote{In the standard notation $\mathbb{M}^s$ is the
  modulation space $M^{2,2}_{v_s}$ with $v_s(z)=(1+|z|^2)^s$ and $\mathbb{M}^s_{\mathrm{e}}$ is
  $M^{2,2} _{w_s}$ with $w_s(z) = e^{s|z|}$. Since we use only
  modulation spaces that are Hilbert spaces, we have simplified the notation.}. 
We feel that the existing results for \modsp s could be
useful for numerical analysts. 
\begin{tm}\label{thm:poly characterized}
For $\alpha=2$, the following identities hold with equivalent norms: 
\begin{itemize}
\item[(i)] $\mathbb{H}^{s} = \mathbb{M}^s$, for $s\geq 0$,
\item[(ii)] $\mathbb{E}^{\frac{1}{2}}_q  = \mathbb{M}^s_{\mathrm{e}}$, for $q=\frac{s}{\sqrt{\pi}}$ and $s\geq 0$,
\item[(iii)] $\mathbb{E}^{1}_q = \mathbb{M}^s_{\mathrm{e}^2}$, for $q=\ln(\frac{\pi}{\pi-s})$ and $\pi>s\geq 0$.
\end{itemize}
\end{tm}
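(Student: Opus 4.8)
The plan is to exploit the fact that the Hermite functions $h_k$ are (up to normalization and the identification $(x,\xi)\simeq z$) the eigenfunctions that diagonalize the short-time Fourier transform with Gaussian window. More precisely, the key classical fact is that $V_\varphi h_k(z)$, written in the complex variable $z = x+i\xi$, equals $e^{i\pi x\xi}e^{-\pi|z|^2/2}$ times a constant multiple of $\bar z^{\,k}$ (equivalently, after the standard unitary correspondence between $L^2(\mathbb{R})$ and the Fock space $\mathcal{F}^2(\mathbb{C})$, the $h_k$ map to the monomials $\pi^{k/2}z^k/\sqrt{k!}$). Hence for each of the three weights $m(z) \in \{(1+|z|^2)^s,\ e^{s|z|},\ e^{s|z|^2}\}$ one has, with $f = \sum_k \hat f_k h_k$ and using that the $\{\bar z^k e^{-\pi|z|^2/2}\}$ are orthogonal in $L^2(\mathbb{R}^2)$,
\begin{equation*}
\|f\|^2_{\mathbb{M}} = \int_{\mathbb{R}^2} m(z)\,|V_\varphi f(z)|^2\,\mathrm{d}z = \sum_{k\in\mathbb{N}} |\hat f_k|^2 \, c_k(m),
\qquad c_k(m) := \frac{\pi^k}{k!}\int_{\mathbb{R}^2} m(z)\,|z|^{2k} e^{-\pi|z|^2}\,\mathrm{d}z .
\end{equation*}
So the whole theorem reduces to computing (or estimating) the moments $c_k(m)$ and comparing them with $(1+k)^s$, $e^{qk^{1/2}}$, and $e^{qk}$ respectively.

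For the concrete evaluation I would pass to polar coordinates and substitute $r^2 = \rho$, reducing each $c_k(m)$ to a one–dimensional integral $\frac{\pi^k}{k!}\int_0^\infty \tilde m(\rho)\,\rho^k e^{-\pi\rho}\,\mathrm{d}\rho$. For part (iii), $m(z) = e^{s|z|^2}$ gives $\tilde m(\rho) = e^{s\rho}$ and the integral is elementary: $\int_0^\infty \rho^k e^{-(\pi - s)\rho}\,\mathrm{d}\rho = k!/(\pi-s)^{k+1}$, so $c_k = \frac{\pi}{\pi-s}\big(\frac{\pi}{\pi-s}\big)^k = \frac{\pi}{\pi-s}e^{qk}$ with $q = \ln\frac{\pi}{\pi-s}$, which is exactly the weight defining $\mathbb{E}^1_q$ (and the constant prefactor $\frac{\pi}{\pi-s}$ is absorbed into the norm equivalence); the condition $\pi > s$ is precisely what makes the integral converge. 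For part (i), $m(z)=(1+|z|^2)^s$ one does not get a clean closed form, so instead I would show $c_k(m) \asymp (1+k)^s$ directly: the density $\frac{\pi^{k+1}}{k!}\rho^k e^{-\pi\rho}$ is a Gamma$(k{+}1,\pi)$ density concentrating around $\rho \approx k/\pi$ with fluctuations of order $\sqrt{k}/\pi$, so $\int (1+\rho)^s \cdot(\text{that density})\,\mathrm{d}\rho \asymp (1+k/\pi)^s \asymp (1+k)^s$; this is made rigorous by splitting the $\rho$–integral into the bulk $|\rho - k/\pi| \le k^{3/4}$ and the tails, using standard Gamma tail bounds (or Stirling) on the latter. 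Part (ii) with $m(z)=e^{s|z|}$, i.e. $\tilde m(\rho) = e^{s\sqrt{\rho}}$, is handled the same way: on the bulk $e^{s\sqrt\rho} \approx e^{s\sqrt{k/\pi}} = e^{(s/\sqrt\pi)\sqrt k} = e^{qk^{1/2}}$ with $q = s/\sqrt\pi$, and a Laplace/saddle-point estimate (or again bulk-plus-tail) shows $c_k \asymp e^{qk^{1/2}}$ up to subexponential factors that are irrelevant for the equivalence of norms (though one should check they don't accumulate — a polynomial-in-$k$ discrepancy is fine since $\mathbb{E}^{1/2}_q$ with different $q$ are genuinely different, but a factor $k^{C}$ times $e^{qk^{1/2}}$ still defines the same space).

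The main obstacle is part (ii): unlike (iii) the moment integral has no elementary closed form, and unlike (i) the target weight is subexponential but superpolynomial, so the matching of $c_k(e^{s|z|})$ with $e^{qk^{1/2}}$ requires a genuine asymptotic analysis of $\int_0^\infty \rho^k e^{s\sqrt\rho - \pi\rho}\,\mathrm{d}\rho$ rather than a crude bound. I expect to localize the integrand near its maximum at $\rho_* \approx k/\pi$ (the correction from the $e^{s\sqrt\rho}$ factor is lower order), Taylor-expand $k\log\rho + s\sqrt\rho - \pi\rho$ to second order there, and get $c_k = e^{qk^{1/2}}\cdot k^{O(1)}\cdot(1+o(1))$, which suffices. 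A small additional point to verify throughout is the interchange of the sum over $k$ with the integral over $z$ in the displayed identity above, which is justified by monotone convergence since every term is nonnegative. Finally, one should record that these norm equivalences are exactly the assertion $\mathbb{H}^s = \mathbb{M}^s$ etc. as sets with equivalent norms, since two weight sequences $(c_k)$ and $(d_k)$ with $c_k \asymp d_k$ define the same weighted $\ell^2$ space.
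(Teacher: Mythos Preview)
Your overall strategy is correct and matches the paper's: use the Bargmann/Fock correspondence to write $|V_\varphi f(z)|^2 = |Bf(\bar z)|^2 e^{-\pi|z|^2}$, exploit the orthogonality of monomials against any radial weight, and reduce everything to the one–parameter family of moments $c_k(m)=\tfrac{\pi^k}{k!}\int_{\mathbb{R}^2} m(z)\,|z|^{2k}e^{-\pi|z|^2}\,\mathrm{d}z$. Part (iii) is exactly as in the paper. For part (i) the paper takes a slightly cleaner route than your concentration argument: it uses $(1+\rho)^s\asymp 1+\rho^s$ and then the Gamma ratio $\Gamma(k+s+1)/\Gamma(k+1)\asymp k^s$, which avoids any bulk/tail splitting.

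There is, however, a genuine gap in your treatment of part (ii). You write that ``a factor $k^{C}$ times $e^{qk^{1/2}}$ still defines the same space'' and that subexponential corrections are ``irrelevant for the equivalence of norms''. This is false. Two weight sequences $(c_k)$ and $(d_k)$ define the \emph{same} weighted $\ell^2$ space with \emph{equivalent} norms if and only if $c_k/d_k$ is bounded above and below away from $0$; a stray factor $k^{C}$ with $C\neq 0$ destroys both the norm equivalence and the equality of the underlying sets (take $\hat f_k = k^{-1/2-\epsilon}e^{-q\sqrt{k}/2}$ for small $\epsilon>0$). So for (ii) you must actually prove $c_k(e^{s|z|})\asymp e^{q\sqrt{k}}$ with bounded ratio, not merely up to polynomial factors. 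A careful Laplace analysis of $\tfrac{1}{k!}\int_0^\infty u^k e^{q\sqrt{u}-u}\,\mathrm{d}u$ (this is $\mathbb{E}[e^{q\sqrt{U}}]$ for $U\sim\mathrm{Gamma}(k+1,1)$) does in fact yield $c_k\sim e^{q\sqrt{k}}\cdot\text{const}$ with no polynomial correction, but you have to carry the expansion to that precision rather than dismiss it. The paper sidesteps this by expressing the integral exactly in terms of the confluent hypergeometric function ${}_1F_1$ and then invoking its large-$a$ asymptotics via modified Bessel functions, which makes the cancellation of the would-be polynomial factor explicit.
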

\begin{proof}
We use the Bargmann transform to translate the norms to an equivalent
norm for a subspace of  the Bargmann-Fock space
$\mathcal{F}$~\cite{Folland:1989aa}. This space  consists of all entire functions $F$ such that 
\begin{equation*}
\|F\|_{\mathcal{F}}^2 = \int_{\mathbb{C}} \left| F(z) \right|^2 \mathrm{e}^{-\pi |z|^2}\mathrm{d}z
\end{equation*}
is finite. The normalized monomials $\{\sqrt{ \frac{\pi ^k}{k!}}
  \,z^k\}_{k \in \bN }$ form an orthonormal basis for $\mathcal{F}$ with respect to the inner product 
 \begin{equation*}
 \langle F,G\rangle_{\mathcal{F}} =\int_{\mathbb{C}} F(z) \overline{G(z)} \mathrm{e}^{-\pi |z|^2}\mathrm{d}z.
 \end{equation*}
The Bargmann transform defined by 
\begin{equation*}
Bf(z) = 2^{1/4} e^{-\pi z^2/2}\,  \int _{-\infty }
^\infty f(x) e^{2\pi z x} e^{-\pi x^2} \, \mathrm{d}x
\end{equation*}
 is a unitary operator
from $L^2(\bR )$ onto $\cF $ and maps the Hermite functions to the
normalized monomials, i.e., 
\begin{equation}\label{eq:Bargman}
\mathcal{B}: L^2(\mathbb{R})\longrightarrow\mathcal{F},\qquad 
 h_k \mapsto \sqrt{ \frac{\pi ^k}{k!}} \,z^k.
\end{equation}
With the identification $(x,\xi ) \simeq z= x+i\xi \in \bC $, the
connection of $B$ to the \stft\ is given by the formula  
\begin{equation}\label{eq:V and B}
V_\varphi f (z)  = \mathrm{e}^{-\mathrm{i} x \xi} (Bf)(\bar z) e^{-\frac{\pi}{2} |z|^2},
\end{equation}
cf.~\cite{Folland:1989aa,Grochenig:2011aa}. For the proof  we  use
the Hermite  expansion
\begin{equation*}
f=\sum_{k=0}^\infty \hat{f}_k h_k 
\end{equation*}
of  $f\in L^2(\mathbb{R})$. The main point of the proof is the fact
that the monomials $z^k, k\in \bN $, are still orthogonal with respect to 
the weighted inner product $\int F(z) \overline{G(z)} w(z) e^{-\pi
  |z|^2} \, \mathrm{d}z$ for an arbitrary rotation-invariant weight
function $w$. 

(i) We first  consider $\mathbb{M}^s$ and obtain
\begin{align*}
  \|f\|_{\mathbb{M}^s} ^2 &= \int_{\bR ^2} \left|\sum_{k=0}^\infty \hat{f}_k V_\varphi h_k (z)\right|^2 (1+|z|^2)^s \, dz \\
  & = \int_{\bR ^2} \left|\sum_{k=0}^\infty \hat{f}_k  \mathrm{e}^{-\mathrm{i} x \xi} (Bh_k)(\bar z)  \right|^2 (1+|z|^2)^s e^{-\pi |z|^2}\, dz \\
    & = \int_{\bR ^2} \left|\sum_{k=0}^\infty \hat{f}_k  \sqrt{ \frac{\pi ^k}{k!}} \,\bar{z}^k  \right|^2 (1+|z|^2)^s e^{-\pi |z|^2}\, dz \\
&= \sum _{k,l} \hat{f}_k \overline{\hat{f}_l} \sqrt{\frac{\pi ^k}{k!}}
\sqrt{\frac{\pi ^l}{l!}} \int_{\bR ^2} \bar{z}^k z^l   (1+|z|^2)^s
  e^{-\pi |z|^2} \, dz .
  \intertext{Since $ (1+|z|^2)^s$ is rotationally invariant, the monomials are still orthogonal with respect to $(1+|z|^2)^s
  e^{-\pi |z|^2} \mathrm{d}z$ and we obtain}
 \|f\|_{\mathbb{M}^s} ^2&= \sum _{k=0}^\infty |\hat{f}_k|^2 \frac{\pi ^k}{k!} \int _{\bR ^2} 
|z|^{2k} (1+|z|^2)^s e^{-\pi |z|^2} \, dz .
\end{align*}
The use of polar coordinates and the  substitution $t=\pi r^2$ lead to
\begin{align*}
\frac{\pi ^k}{k!} \int _{\bR ^2} 
|z|^{2k} (1+|z|^2)^s e^{-\pi |z|^2} \, dz  & = \frac{2\pi ^{k+1}}{k!} \int_0^\infty r^{2k+1}(1+r^2)^s e^{-\pi r^2}\mathrm{d}r\\
& = \frac{1}{k!} \int_0^\infty t^{k}\big(1+\frac{t}{\pi}\big)^s e^{- t}\mathrm{d}t\\
& \asymp \frac{1}{k!} \int_0^\infty t^{k}\big(1+\frac{t^s}{\pi^s}\big)  e^{- t}\mathrm{d}t \\
&= \frac{1}{\Gamma (k+1)} \Big(  \Gamma (k+1) + \frac{\Gamma
  (k+s+1)}{\pi ^s} \Big) \, .
\end{align*}
The standard asymptotics
$$
\frac{\Gamma (k+s+1)}{\Gamma (k+1)} \asymp 
k^s,
$$
cf.~\cite[5.11.12]{dlmf}, 
lead to \begin{equation*}
\frac{\pi ^{k+1}}{k!} \int_0^\infty t^{k} (1+t^s) e^{-\pi
  t}\mathrm{d}t \asymp (1+k^s)\asymp (1+k)^s\, .
\end{equation*}
Consequently,
$$
\|f\|_{\mathbb{M} ^s}^2 \asymp \sum _{k=0}^\infty (1+k)^s |\hat{f}_k|^2 =
\|f\|_{\mathbb{H}^s}\, ,
$$
which concludes the proof of the identification of  $\mathbb{M}^s$
with $\mathbb{H}^s$. 

(iii) Because it is easier, we prove (iii) next. As  before
\begin{align*}
  \|f\|_{\mathbb{M}^s_{\mathrm{e}^2}} ^2 &= \int_{\bR ^2} |V_\varphi f(z)|^2 e^{s |z|^2}\, dz \\
&= \sum _{k=0}^\infty |\hat{f}_k|^2 \frac{\pi ^k}{k!} \int _{\bR ^2} 
|z|^{2k}  e^{-(\pi-s) |z|^2} \, dz .
\end{align*}
Polar coordinates and the substitution $u=(\pi-s)r^2$ lead to
\begin{align*}
  \frac{\pi ^k}{k!} \int _{\bR ^2} |z|^{2k} e^{-(\pi-s) |z|^2} \, dz&=
    \frac{\pi ^k}{k!} \int _0^\infty  r^{2k} e^{-(\pi-s)r^2} \, 2\pi  r dr\\
    & = \frac{\pi^{k+1}}{k! (\pi-s)^{k+1}}\int _0^\infty  u^{k} e^{-u}   du\\
    & =  \left(\frac{\pi}{\pi-s}\right)^{k+1}\, .
\end{align*}
Consequently,
\begin{align*}
  \|f\|_{\mathbb{M}^s_{\mathrm{e}^2}} = \sum _{k=0}^\infty
  |\hat{f}_k|^2 \Big(\frac{\pi}{\pi - s}\Big)^{k+1} = \frac{\pi}{\pi -
  s} \sum _{k=0}^\infty |\hat{f}_k|^2 \exp \Big( \ln (\frac{\pi}{\pi -
  s} ) k \Big) = \frac{\pi}{\pi - s} \|f\|_{\mathbb{E}^1_q} \, . 
\end{align*}
This
concludes the proof of the identity
$\mathbb{M}^s_{\mathrm{e}^2}=\mathbb{E}^1_q$ with $q = \ln (\tfrac{\pi
}{\pi -s})$.  

(ii) The proof for $\mathbb{M}^s_{\mathrm{e}}$ is more involved and
requires detailed formulas and estimates for hypergeometric functions.
We cite freely the digital library of mathematical functions DLMF~\cite{dlmf}. As above, we obtain 
\begin{align*}
  \|f\|_{\mathbb{M}^s_{\mathrm{e}}} ^2 &= \int_{\bR ^2} |V_\varphi f(z)|^2 e^{s|z|}\, dz \\
&= \sum _{k=0}^\infty |\hat{f}_k|^2 \frac{\pi ^k}{k!} \int _{\bR ^2} 
|z|^{2k} e^{s|z|} e^{-\pi |z|^2} \, dz .
\end{align*}
The use of polar coordinates and the identity $\cosh(x)+\sinh(x)=\mathrm{e}^x$ lead to
\begin{align*}
  \frac{\pi ^k}{k!} \int _{\bR ^2} &|z|^{2k} e^{s|z|} e^{-\pi |z|^2} \, dz=
    \frac{2\pi ^{k+1}}{k!} \int _0^\infty  r^{2k+1} e^{s r-\pi r^2} \, dr\\
    & = \frac{2\pi^{k+1}}{k!} \int_0^\infty r^{2k+1}e^{-\pi r^2} \cosh(sr)\mathrm{d}r 
    + \frac{2\pi^{k+1}}{k!} \int_0^\infty r^{2k+1}e^{-\pi r^2} \sinh(sr)\mathrm{d}r.
    \end{align*}
   For $p,q\in\mathbb{N}$, denote ${}_pF_q$ the hypergeometric
   function and observe 
     \begin{align*}
       \cosh(sr)&={}_0F_1(\frac{1}{2},(\frac{sr}{2})^2),\\
      \sinh(sr)&= {}_0F_1(\frac{3}{2},(\frac{sr}{2})^2)sr. 
        \end{align*}
    According to DLMF \cite[16.5.3]{dlmf}, for $a\in\mathbb{C}$ with positive real part, we have 
    \begin{align*}
{}_1F_1(a,b,z)  &= \frac{1}{\Gamma(a)} \int_0^\infty t^{a-1}e^{-t} {}_0F_1(b,zt)\mathrm{d}t,\\
& = \frac{2\pi^a}{\Gamma(a)} \int_0^\infty r^{2a-1}e^{-\pi r^2} {}_0F_1(b,z\pi r^2)\mathrm{d}r,
    \end{align*}
    where the second equality is due to the substitution $r = \sqrt{\frac{t}{\pi}}$. This leads to 
    \begin{align*}
    \frac{2\pi^{k+1}}{k!} \int_0^\infty r^{2k+1}e^{-\pi r^2} \cosh(sr)\mathrm{d}r &= {}_1F_1(k+1;\frac{1}{2},\frac{s^2}{4\pi})\\
    \frac{2\pi^{k+1}}{k!} \int_0^\infty r^{2k+1}e^{-\pi r^2} \sinh(sr)\mathrm{d}r & = \frac{s 2\pi^{k+1}}{k!} 
 \int_0^\infty r^{2k+2}e^{-\pi r^2}{}_0F_1(\frac{3}{2},(\frac{sr}{2})^2)\\
 & = \frac{q}{\pi^{\frac{1}{2}}} \frac{\Gamma(k+\frac{3}{2})}{\Gamma(k+1)}{}_1F_1(k+\frac{3}{2};\frac{3}{2};\frac{s^2}{4\pi}).
    \end{align*}
For large $a$, 
\begin{equation*}
{}_1F_1(a,b,z^2) \asymp \left(4a-2b\right)^{\frac{1-b}{2}}I_{b-1}\left(z\sqrt{4a-2b}\right),
\end{equation*}
cf.~\cite[Eq.~(4.6.42) in Section 4.6.1]{Slater:60}, where $I_{b-1}$ denotes the modified Bessel function. (Note that there is a square missing in \cite[Eq.~(2.2)]{Temme:2013aa}). According to DLMF~\cite[10.40.1]{dlmf} we have, for large $x$, 
\begin{equation*}
I_\nu(x) \asymp \frac{e^x}{\sqrt{2\pi x}}.
\end{equation*}
Here, we only consider $b=\frac{1}{2}$ or $b=\frac{3}{2}$, so that we need these asymptotics for $\nu=\pm \frac{1}{2}$, in which case we even have
\begin{equation*}
I_{\pm \frac{1}{2}}(x) = \frac{e^x \mp e^{-x}}{\sqrt{2\pi x}}.
\end{equation*}
Therefore, we obtain
\begin{equation*}
{}_1F_1(k+1,\frac{1}{2},\frac{s^2}{4\pi})\asymp (4k+3)^{\frac{1}{4}} \frac{e^{\frac{s}{2\sqrt{\pi}}\sqrt{4k+3}}}{\sqrt{2\pi \frac{s}{2\sqrt{\pi}}\sqrt{4k+3}}}\asymp e^{\frac{s}{\sqrt{\pi}}\sqrt{k}}.
\end{equation*}
Since $\frac{\Gamma(k+\frac{3}{2})}{\Gamma(k+1)}\asymp  \sqrt{k}$, we have
\begin{equation*}
\frac{\Gamma(k+\frac{3}{2})}{\Gamma(k+1)}{}_1F_1(k+\frac{3}{2};\frac{3}{2};\frac{s^2}{4\pi})  \asymp k^{\frac{1}{2}} (4k+3)^{-\frac{1}{4}}\frac{e^{\frac{s}{2\sqrt{\pi}}\sqrt{4k+3}}}{\sqrt{2\pi \frac{s}{2\sqrt{\pi}}\sqrt{4k+3}}}
 \asymp e^{\frac{s}{\sqrt{\pi}}\sqrt{k}},
\end{equation*}
which concludes the proof for $\mathbb{M}^s_{\mathrm{e}}$. 
\end{proof}

The identification $\mathbb{M}^s=\mathbb{H}^s$ in the case
$\alpha=2$  is also mentioned  in \cite[Eq.~(1.5)]{Janssen:2005aa}
without proof. 
 We also point out that  
our quadrature bounds for $\mathbb{M}^s_{\mathrm{e}}$ in Theorem
\ref{thm:alles} are not covered by \cite{Irrgeher:2015ab}, see also
Remark \ref{rem:irre}.

To complete the  discussion of modulation spaces (or Hermite spaces),
we add two relevant characterizations that explain why we consider
them as Sobolev spaces where  Sobolev space is understood as the domain
of a differential operator.

Consider the differential operator $\mathcal{L}f(x) = -\frac{1}{2\pi}
f''(x) + 2\pi x^2f(x)$. In quantum mechanics $\mathcal{L}$ is the Schr\"odinger
operator of the harmonic oscillator. Its eigenfunctions are precisely
the Hermite functions, and we have  $$
\mathcal{L}h_n = (2n+1)h_n
$$
in our normalization of the Hermite functions, see
\cite{Folland:1989aa}. Then we have the following identification.
\begin{lemma} \label{explainsob1}
  $\hs = \mathrm{dom}\, _{L^2} \mathcal{L}^{s/2}  = \mathcal{L}^{-s/2}
  \big( L^2(\bR )\big)$ with equivalence of norms. 
\end{lemma}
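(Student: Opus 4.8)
The plan is to exploit the fact that $\{h_k\}_{k\in\N}$ is simultaneously the orthonormal basis of eigenfunctions of $\mathcal L$ and the basis used to define $\hs$. Since $\mathcal L h_k=(2k+1)h_k$, the operator $\mathcal L$ is a positive self-adjoint operator on $L^2(\bR)$ with a complete eigenbasis, so by the spectral theorem its fractional powers are defined by $\mathcal L^{s/2} f=\sum_{k\in\N}(2k+1)^{s/2}\hat f_k\, h_k$, with natural domain $\mathrm{dom}_{L^2}\mathcal L^{s/2}=\{f\in L^2(\bR):\sum_k (2k+1)^{s}|\hat f_k|^2<\infty\}$ and squared graph norm $\|f\|_{L^2}^2+\|\mathcal L^{s/2}f\|_{L^2}^2=\sum_k(1+(2k+1)^{s})|\hat f_k|^2$. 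First I would record these two facts; then the proof is essentially the observation that $1+(2k+1)^s\asymp (1+k)^s$ uniformly in $k\in\N$, which is elementary. This immediately gives $\mathrm{dom}_{L^2}\mathcal L^{s/2}=\hs$ with equivalent norms.

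Next I would address the second equality $\mathrm{dom}_{L^2}\mathcal L^{s/2}=\mathcal L^{-s/2}(L^2(\bR))$. Because $0$ is not in the spectrum of $\mathcal L$ (the smallest eigenvalue is $1$), the operator $\mathcal L^{-s/2}$, defined spectrally by $\mathcal L^{-s/2}g=\sum_k(2k+1)^{-s/2}\hat g_k\,h_k$, is a bounded bijection of $L^2(\bR)$ onto $\mathrm{dom}_{L^2}\mathcal L^{s/2}$ with bounded inverse $\mathcal L^{s/2}$. Indeed for $g\in L^2(\bR)$ one computes $\|\mathcal L^{s/2}(\mathcal L^{-s/2}g)\|_{L^2}^2=\sum_k(2k+1)^{s}(2k+1)^{-s}|\hat g_k|^2=\|g\|_{L^2}^2<\infty$, so $\mathcal L^{-s/2}g\in\mathrm{dom}_{L^2}\mathcal L^{s/2}$; conversely if $f\in\mathrm{dom}_{L^2}\mathcal L^{s/2}$ then $f=\mathcal L^{-s/2}(\mathcal L^{s/2}f)$ with $\mathcal L^{s/2}f\in L^2(\bR)$. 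Hence the range of $\mathcal L^{-s/2}$ equals the domain of $\mathcal L^{s/2}$, and the norm $\|f\|_{\mathcal L^{-s/2}L^2}:=\|\mathcal L^{s/2}f\|_{L^2}$ (the transported norm) is equivalent to the $\hs$-norm by the same comparison $1+(2k+1)^s\asymp(1+k)^s$ as above, up to also noting $(2k+1)^s\asymp(1+k)^s$ on its own for $s\ge 0$.

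There is essentially no hard step here: the whole argument is a diagonalization in the Hermite basis followed by a two-sided comparison of the sequences $(1+k)^s$ and $(2k+1)^s$. The only point requiring a word of care is the convention for the "domain of $\mathcal L^{s/2}$" — whether one equips it with the graph norm $(\|f\|_{L^2}^2+\|\mathcal L^{s/2}f\|_{L^2}^2)^{1/2}$ or the homogeneous norm $\|\mathcal L^{s/2}f\|_{L^2}$ — but since $\mathcal L\ge 1$ these two norms are equivalent, so the statement holds for either convention, and I would simply remark this. The self-adjointness of $\mathcal L$ on $L^2(\bR)$ and the completeness of the Hermite basis are classical (cf.~\cite{Folland:1989aa}) and may be quoted.
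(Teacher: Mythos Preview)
Your proof is correct and follows essentially the same route as the paper: diagonalize $\mathcal{L}$ in the Hermite basis, define fractional powers spectrally, and use the elementary equivalence $(2k+1)^s \asymp (1+k)^s$. The paper's proof is terser (one sentence) and does not spell out the second equality or the graph-norm versus homogeneous-norm issue, but your additional details are all accurate and in the same spirit.
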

\begin{proof}
  Since $\mathcal{L}$ is diagonalized by the Hermite basis, it follows
  that
  $\mathcal{L}^{s/2}f\in L^2(\bR )$ \fif\ $\sum _{k=0}^\infty |\langle
  f, h_k\rangle |^2 (1+2k)^s <\infty $, and thus $\|\mathcal{L}^{s/2}f\|_2
  \asymp \|f\|_{\hs }$. 
\end{proof}
In this context $\hs $ is referred to as the Shubin class, and as
always $\hs $ coincides with the domain of much more general
pseudodifferential operators. See Shubin's
book~\cite[Ch.~25.3]{shubin} for a detailed exposition.

Next, for $s\geq 0$ let $L^2_s$ be the weighted $L^2$-space defined by
the norm $\|f\|_{L^2_s}^2 = \int_{\mathbb{R}} |f(x)|^2 (1+|x|)^{2s} \,
\mathrm{d}x$ and let $\mathcal{F} L^2_s = \{ f\in L^2: \hat{f} \in L^2_s\}$ be its image under the
Fourier transform. Note that $\mathcal{F} L^2_s$ is the standard 
Sobolev space on $\bR $. The following identification is mentioned 
in~\cite{BCG04,CPRT05,heil03}. 
\begin{lemma} \label{explainsob2}
  $\mathbb{M}^s = \hs = L^2_s(\bR ) \cap \mathcal{F} L^2_s(\bR )$. 
\end{lemma}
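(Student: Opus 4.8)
The plan is to establish the two set-equalities $\mathbb{M}^s = \mathbb{H}^s$ and $\mathbb{H}^s = L^2_s(\bR) \cap \mathcal{F}L^2_s(\bR)$ separately, with equivalence of norms in each case. The first equality is already contained in Theorem~\ref{thm:poly characterized}(i), so nothing new is needed there; I would simply invoke it. The substance of the lemma is the identification of the Shubin-type Sobolev space $\mathbb{H}^s$ (equivalently, the domain of $\mathcal{L}^{s/2}$ by Lemma~\ref{explainsob1}) with the intersection of the ordinary Sobolev space and its Fourier image.

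For the second equality I would work at the level of the operator $\mathcal{L}f = -\frac{1}{2\pi}f'' + 2\pi x^2 f$. The key observation is that $\mathcal{L}$ splits as $\mathcal{L} = A + B$ where $Af = -\frac{1}{2\pi}f''$ has domain $\mathcal{F}L^2_2$ (the order-$2$ Fourier-side Sobolev space, since $\widehat{Af}(\xi) = 2\pi \xi^2 \hat f(\xi)$ after the normalization of the Fourier transform) and $Bf = 2\pi x^2 f$ has domain $L^2_2$. Both $A$ and $B$ are nonnegative self-adjoint operators, and on the Hermite basis $\mathcal{L}$ acts as multiplication by $2k+1$, so $\mathrm{dom}\,\mathcal{L} = \mathbb{H}^2$. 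One then has the operator inequalities $A \le \mathcal{L}$ and $B \le \mathcal{L}$ in the form sense (since $A,B \ge 0$ and $\mathcal{L} = A+B$), which already gives $\mathbb{H}^1 \subseteq L^2_1 \cap \mathcal{F}L^2_1$ by interpolation/spectral calculus applied to the square roots; conversely, for the reverse inclusion one needs that $\|Af\|_2 + \|Bf\|_2 \lesssim \|\mathcal{L}f\|_2 + \|f\|_2$, i.e.\ that there is no cancellation between the two positive pieces. This is a standard fact for the harmonic oscillator: since $[A^{1/2}, B^{1/2}]$ is controlled, or more simply since $\|f''\|_2^2 + \|x^2 f\|_2^2 \lesssim \|\mathcal{L}f\|_2^2 + \|f\|_2^2$ follows by expanding $\|\mathcal{L}f\|_2^2 = \|Af\|_2^2 + \|Bf\|_2^2 + 2\,\mathrm{Re}\langle Af, Bf\rangle$ and integrating the cross term by parts (it produces only lower-order terms, in fact $2\,\mathrm{Re}\langle Af,Bf\rangle \ge -C\|f\|_2^2$ after integration by parts, a Hardy-type estimate). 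Taking fractional powers then upgrades the $s=1$ statement to general $s \ge 0$ by the spectral theorem together with the standard fact that domains of fractional powers interpolate.

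Alternatively — and this is the route I would actually write out, since it avoids commutator estimates — I would phrase everything through norm equivalences: show directly that
\begin{equation*}
\|f\|_{\mathbb{H}^s}^2 \asymp \|f\|_{L^2_s}^2 + \|f\|_{\mathcal{F}L^2_s}^2 = \int_{\bR}|f(x)|^2(1+|x|)^{2s}\,dx + \int_{\bR}|\hat f(\xi)|^2(1+|\xi|)^{2s}\,d\xi.
\end{equation*}
For $s$ an even integer this is an exact statement about $\|\mathcal{L}^{s/2}f\|_2$ expanded into monomials $x^{2j}\partial^{2\ell}$, and one checks that after integration by parts all terms are dominated by the right-hand side (and vice versa, the top-order terms $\|x^s f\|_2$ and $\|\widehat{\partial^s f}\|_2$ already appear, up to lower-order corrections, inside $\|\mathcal{L}^{s/2}f\|_2$). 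For general real $s \ge 0$ one interpolates: $\mathbb{H}^s = [L^2, \mathbb{H}^{2\lceil s/2\rceil}]_\theta$ on one side, and $L^2_s \cap \mathcal{F}L^2_s = [L^2, L^2_{2m} \cap \mathcal{F}L^2_{2m}]_\theta$ on the other, using that complex interpolation commutes with finite intersections of Hilbert spaces.

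The main obstacle is the reverse inclusion $L^2_s \cap \mathcal{F}L^2_s \subseteq \mathbb{H}^s$, i.e.\ controlling $\|\mathcal{L}^{s/2}f\|_2$ by the two separate Sobolev norms: one must rule out cancellation between the differentiation part and the multiplication part of $\mathcal{L}$. For $s$ an even integer this is the integration-by-parts bookkeeping sketched above; for non-integer $s$ it is the verification that complex interpolation behaves well for the intersection space $L^2_s \cap \mathcal{F}L^2_s$, which is standard but worth a citation (e.g.\ to the cited references \cite{BCG04,CPRT05,heil03} or to Shubin~\cite{shubin}). Everything else is routine spectral calculus on the Hermite basis.
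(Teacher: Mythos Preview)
The paper does not prove this lemma; it is stated as a known identification with references to \cite{BCG04,CPRT05,heil03}, so there is no argument in the paper to compare against. Your sketch via the splitting $\mathcal{L}=A+B$ with $A=-\tfrac{1}{2\pi}\partial^2$ and $B=2\pi x^2$ is a standard and workable route, and the ingredients you list (form inequalities from $A,B\ge 0$, integration by parts for the cross term $\langle Af,Bf\rangle$, interpolation for fractional $s$) are the right ones.

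Two points need care. First, you have the directions mislabelled at the level of integer $s$: for $s=2$ the inclusion $L^2_2\cap\mathcal{F}L^2_2\subseteq\mathbb{H}^2$ is the trivial one (triangle inequality $\|\mathcal{L}f\|_2\le\|Af\|_2+\|Bf\|_2$), while $\mathbb{H}^2\subseteq L^2_2\cap\mathcal{F}L^2_2$ is where one must rule out cancellation, i.e., show $\|Af\|_2+\|Bf\|_2\lesssim\|\mathcal{L}f\|_2+\|f\|_2$. Your integration-by-parts computation in fact yields $2\,\mathrm{Re}\langle Af,Bf\rangle = 2\int x^2|f'|^2\,\mathrm{d}x - 2\|f\|_2^2\ge -2\|f\|_2^2$, which is exactly what is needed for this direction, so the argument survives once the bookkeeping is corrected. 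Second, the claim that ``complex interpolation commutes with finite intersections of Hilbert spaces'' is not true in general; it holds here because the intersection scale $L^2_s\cap\mathcal{F}L^2_s$ can itself be realised as the domain scale of a single self-adjoint operator (or equivalently identified with $\mathbb{M}^s$, where interpolation is immediate), but you should justify this for the specific scale or cite one of the listed references rather than state it as a general principle.
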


Finally we mention that the \modsp s $\hs = \mathcal{M}^s$ and
$\mathbb{E}^{\frac{1}{2}}_q=\mathbb{M}^s_{\mathrm{e}}$ are invariant
with respect to  time-frequency shifts defined by $f_{y,\eta } (x) =
e^{2\pi i \eta x} f(x-y)$ (shift by $y$ and modulation by $\eta$): if
$f\in \hs $, then $f_{y,\eta } \in \hs $ for all $y,\eta \in \bR $ and
similarly for $ \mathbb{M}^s_{\mathrm{e}}$. This is easy to see from 
the definition of $\mathbb{M}^s$, but is not at all obvious when using
the $\hs $-norm. This invariance property is just one example of how time-frequency
methods facilitate the investigation of general Hermite spaces.

\section{Numerical experiments}\label{sec:nums}
In this section we make the error estimates of Theorem~\ref{thm:alles}
more explicit for the \modsp s and compare them with numerical
simulations.  We provide some numerical experiments for Gaussian
quadrature nodes $X_n$ with associated Christoffel weights
$\{\omega(x)\}_{x\in X_n}$ in the modulation spaces
$\mathbb{M}^s_{\mathrm{e}^2}$ and $M^s_{\mathrm{e}}$.  
We omit the case $\mathbb{M}^s$ with polynomial weights, as it was 
already  presented in 
\cite{Dick:2018aa}. Note that the results in\cite{Dick:2018aa}
suggest a decay rate of $\cO (n^{-s})$ instead of the theoretical
bound $\cO (n^{-s+\frac{1}{2}})$ proved  in Theorem \ref{thm:alles}. 
\subsection{The modulation space $\mathbb{M}^s_{\mathrm{e}^2}$}\label{sec:num for Me2}
We choose $t>1$ and consider the (slightly modified) Mehler kernel
\begin{align} \label{eq:mehler}
K_t(x,y) &= \sum_{k=0}^\infty t^{-(k+1)} h_k(x)h_k(y) 
 = \sqrt{\frac{2}{t^2-1}} e^{\frac{\pi}{t^2-1}\left(4t xy - (t^2+1)( x^2+y^2)\right) }.
\end{align}
In view of the identification $\mathbb{M}^s_{\mathrm{e}^2}= \mathbb{E}^1_q$, \eqref{eq:kernel Me} in Lemma~\ref{lem:rk}  with
$t=\frac{\pi}{\pi-s}$ shows that  
$K_t$ is the reproducing kernel of $\mathbb{M}^s_{\mathrm{e}^2}$.

We now use a 
standard identity for the worst case integration error in terms of the
reproducing kernel, see, e.g., \cite{Graf:2013zl} or
\cite[Prop.~2.11]{DP10a}. We have 
\begin{align}
\lefteqn{\sup_{\substack{f\in \mathbb{M}^s_{\mathrm{e}^2},
  \|f\|_{\mathbb{M}^s_{\mathrm{e}^2}}\leq 1}} \left|
  \int\limits_{\,-\infty}^\infty f(x) W(x) \mathrm{d}x- \sum_{x\in
  X_n}\omega(x) f(x)\right|^2}\notag \\
  & =  \int\limits_{-\infty}^\infty \int\limits_{-\infty}^\infty
    K_t(x,y)W(x)\mathrm{d}x W(y)\mathrm{d}y + \sum_{x,y\in
    X_n}\omega(x) \omega(y)  K_t(x,y) \notag \\
& \qquad \qquad  -2\sum_{x\in X_n} \omega(x)   \int\limits_{-\infty}^\infty K_t(x,y)W(y)\mathrm{d}y,
\label{ju2} \end{align}
For the Mehler kernel \eqref{eq:mehler} we can make the worst case error
more explicit. Since $W= 2^{-1/4} h_0  $ and 
\begin{equation*}
\int_{-\infty}^{\infty} W^2(x)\mathrm{d}x = \int _{-\infty }^\infty
e^{-2\pi x^2} \, \mathrm{d}x=  \frac{1}{\sqrt{2}}, 
\end{equation*}
 the orthogonality of the Hermite functions   implies 
\begin{align*}
  \int_{-\infty}^\infty K_{t}(x,y)W(x)\mathrm{d}x
  &=   \sum_{k=0}^\infty t^{-k-1} \int   _{\infty } ^\infty  h_k(x) \,
    W(x) dx \, h_k(y) \\
&=   \sum_{k=0}^\infty t^{-k-1} 2^{-1/4} \langle h_k,h_0 \rangle
                           h_k(y) =  t^{-1} W(y)\, .
\end{align*}
A further integration with respect to $y$ yields 
\begin{equation*}
\int\limits_{-\infty}^{\infty}\int\limits_{-\infty}^{\infty}
K_{t}(x,y)W(x)\mathrm{d}x \, W(y)\mathrm{d}y = t\inv \int _{-\infty
}^\infty W(y)^2 \, dy =  \frac{1}{\sqrt{2}t }.
\end{equation*}
For the third term in \eqref{ju2} we use the exact quadrature formula
\eqref{eq:quadrature} for $k=0$ and obtain that 
\begin{align*}
-2\sum_{x\in X_n} \omega(x)   \int_{-\infty}^\infty K_t(x,y)W(y)\mathrm{d}y & =  -2t^{-1}\sum_{x\in X_n}\omega(x) W(x)\\
& = -2 t^{-1} \int_{-\infty}^\infty W^2(x)dx=-\frac{2}{\sqrt{2} t}.
\end{align*}
Consequently, we now have 
\begin{align}
\mathrm{WCE}(n, \mathbb{M}^2_{\mathrm{e}^2})& :=  \sup_{\substack{f\in \mathbb{M}^s_{\mathrm{e}^2}, 
    \|f\|_{\mathbb{M}^s_{\mathrm{e}^2}}\leq 1}}\left|
  \int\limits_{\,-\infty}^\infty f(x) W(x) \mathrm{d}x- \sum_{x\in
     X_n} \omega(x)f(x)\right|^2 \notag \\
  &= \sum_{x,y\in X_n}\omega(x)
\omega(y)K_{t}(x,y)-\frac{1}{\sqrt{2} \, t}. \label{eq:kernel wce}
\end{align}
By Theorem \ref{thm:poly characterized}(iii)  the \modsp\ $\mathbb{M}^s_{\mathrm{e}^2}$
coincides with the Sobolev-type  space $\mathbb{E}^1_q$ with parameter $q = \ln
\frac{\pi}{\pi -s} = \ln t $. For the case  $p=1$    the theoretical
predication of    
Theorem \ref{thm:alles}   ensures that the decay rate of the worst case
error \eqref{eq:kernel wce} is of the order 
$$ \mathrm{WCE}(n, \mathbb{M}^2_{\mathrm{e}^2}) = \cO \left(e^{-q(2n)^p}\right) = \cO \left( (e^{-q})^{2n}\right) = \cO \left(
t^{-2n} \right) \, ,
$$
so that $\log _{10} \mathrm{WCE}(n, \mathbb{M}^2_{\mathrm{e}^2}) \leq
C - 2n\log_{10}(t)$. This means that  the logarithm of the worst case quadrature error from~\eqref{eq:kernel wce} 
must stay below a line of slope
$-2\log_{10}(t)$.

In the  numerical experiments we have evaluated  the explicit
formula~\eqref{eq:kernel wce} for various values of $t$, namely   
$t_1=\tfrac{\pi}{\pi -s_1}=\frac{5}{4}$ and $t_2=\tfrac{\pi}{\pi -s_1}=\frac{50}{49}$. Note that the closed
form~\eqref{eq:mehler}  of the Mehler kernel  $K_t$  permits a direct evaluation of $
\mathrm{WCE}(n, \mathbb{M}^2_{\mathrm{e}^2})$.  

We then let $n$ run through the odd integers from $3$ to $41$ and plot
the resulting line,  see Figure \ref{fig:mehler}. We  observe that the actual slope is
slightly steeper than the theoretical prediction.
\begin{figure}
\subfigure[$t_1=\frac{5}{4}$, slope $-0.35$]{
\includegraphics[width=.45\textwidth]{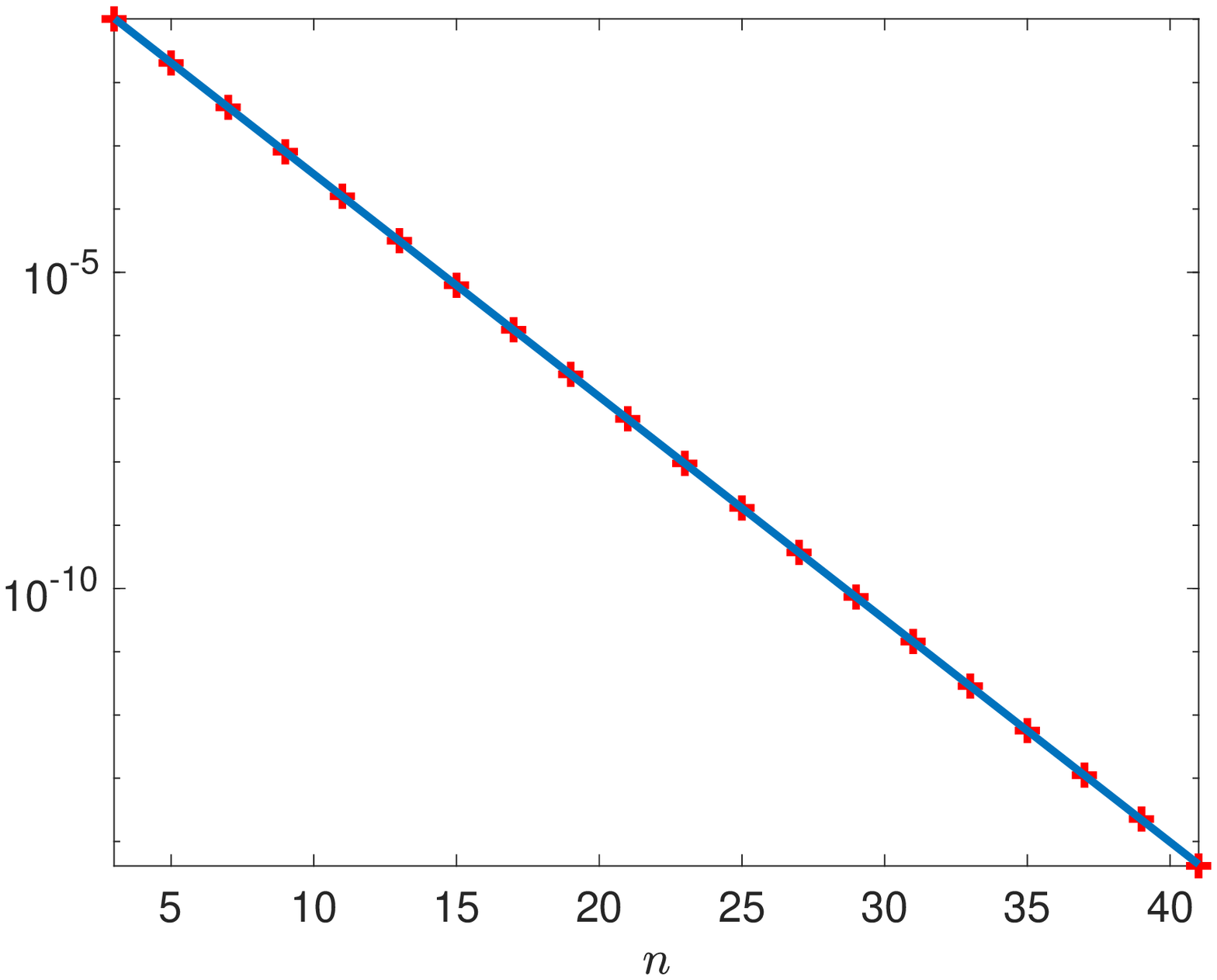}}
\subfigure[$t_2=\frac{50}{49}$, slope $-0.03$]{
\includegraphics[width=.45\textwidth]{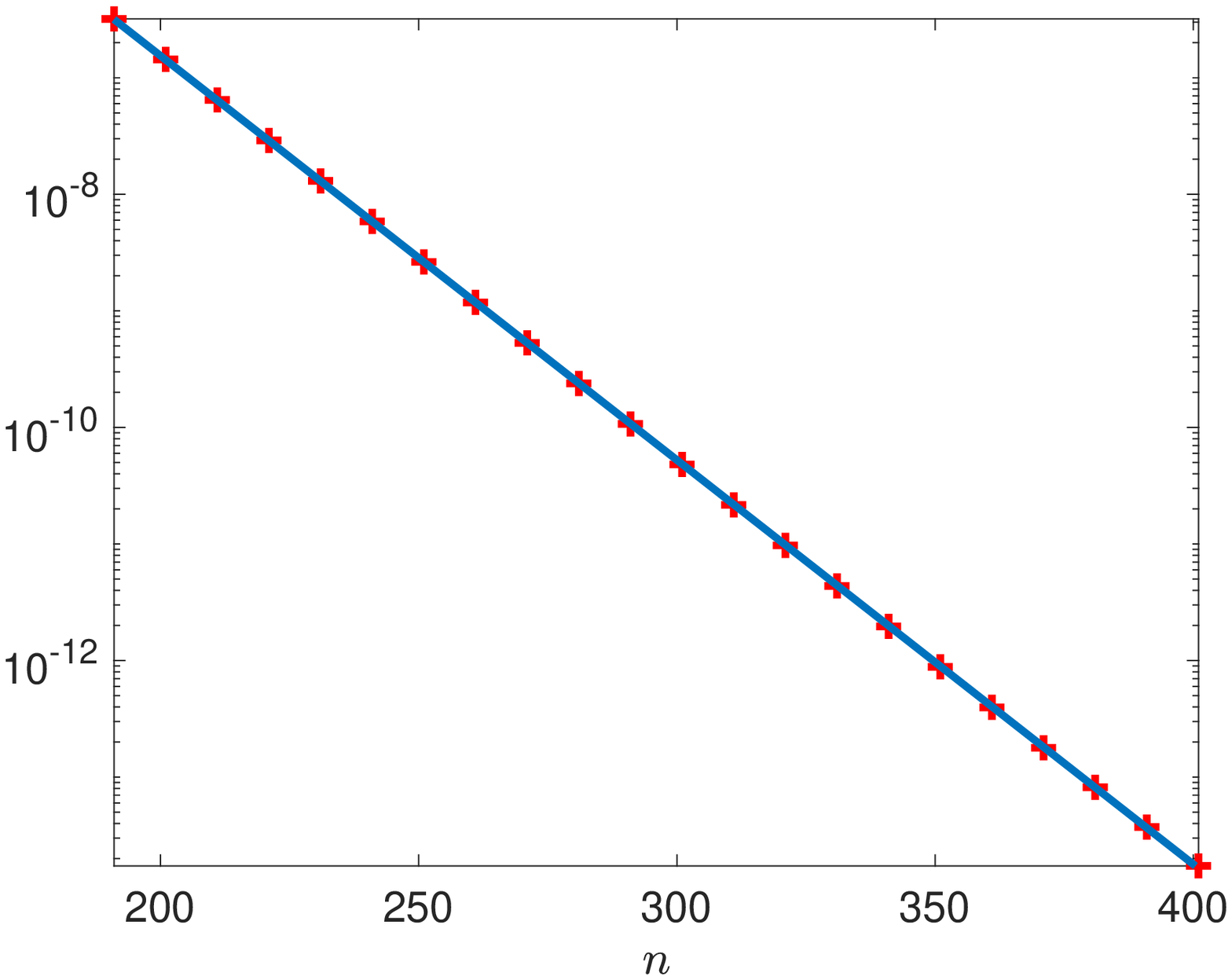}}
\caption{Logarithmic plot of the Gaussian quadrature error in
  $\mathbb{M}^s_{\mathrm{e}^2}$ against $n$ for
  $s_i=(1-\frac{1}{t_i})\pi$ with $t_1=\frac{5}{4}$ and
  $t_2=\frac{50}{49}$, for $i=1,2$. The blue line is a linear least
  squares fit of the actual  evaluation of the   worst case
  quadrature error \eqref{eq:kernel wce} at odd $n$ marked in
  red. The
 slope of the least squares fit is slightly steeper than the
 theoretical slope  $-2\log_{10}(t_1)\approx -0.20$ and
 $-2\log_{10}(t_2)\approx -0.02$. 
}\label{fig:mehler}
\end{figure}

\subsection{The modulation space $\mathbb{M}^s_{\mathrm{e}}$}\label{sec:Me1}
We use the equivalence of norms in
$\mathbb{M}^s_{\mathrm{e}}=\mathbb{E}^{\frac{1}{2}}_q$ with
$q=\frac{s}{\sqrt{\pi}}$ proved in   Theorem \ref{thm:poly
  characterized}(ii). By   the proof of Theorem
\ref{thm:alles}, in particular~\eqref{eq:ch2},  the worst case
quadrature error in $\mathbb{M}^s_{\mathrm{e}}$ is given by  
\begin{align}
\mathrm{WCE}(n, \mathbb{M}^2_{\mathrm{e}})&:= \sup_{\substack{f\in \mathbb{M}^s_{\mathrm{e}}, 
  \|f\|_{\mathbb{M}^s_{\mathrm{e}}}\leq 1}} \left|
  \int\limits_{\,-\infty}^\infty f(x) W(x) \mathrm{d}x- \sum_{x\in  X_n}\omega(x)
   f(x)\right|^2 \notag \\
&  \asymp \sum_{x,y\in X_n}
  \omega(x)\omega(y) \sum_{k=2n}^\infty
  \mathrm{e}^{-q\sqrt{k}}h_{k}(x)h_{k}(y). \label{eq:remainder of wce}
\end{align}
Theorem \ref{thm:alles} ensures that \eqref{eq:remainder of wce}
decays at least as $\mathrm{e}^{-q\sqrt{2n}}$.  Thus 
\begin{equation*}
\log _{10} \mathrm{WCE}(n, \mathbb{M}^2_{\mathrm{e}}) \leq C+
\log_{10}(\mathrm{e}^{-q\sqrt{2n}}) = C
-\sqrt{2}q\sqrt{n}\log_{10}(\mathrm{e}) \, .
\end{equation*}
This means that the logarithm of the worst case error  \eqref{eq:remainder of wce} must stay
below a line of slope
$-\sqrt{2}\frac{s}{\sqrt{\pi}}\log_{10}(\mathrm{e})$ when plotted
against $\sqrt{n}$. 

An additional difficulty arises in numerical experiments, because the
term \linebreak $\sum_{k=2n}^\infty \mathrm{e}^{-q\sqrt{k}}h_{k}(x)h_{k}(y)$ in
\eqref{eq:remainder of wce} cannot be evaluated exactly. Nonetheless,
the series converges so fast  that a simple truncation yields
sufficient  numerical accuracy. Indeed, we evaluate a truncation
of \eqref{eq:remainder of wce} for several small values of $n$ in
Figure \ref{fig:mehler 2}. Our plots show a slightly steeper slope
than $-\sqrt{2}\frac{s}{\sqrt{\pi}}\log_{10}(\mathrm{e})$ when plotted 
against $\sqrt{n}$. We must point out though that the calculations are less stable than our previous experiments in Section \ref{sec:num for Me2}, where no truncation was needed. 

\begin{figure}
\subfigure[$s_1=1$, slope $-0.50$]{
\includegraphics[width=.45\textwidth]{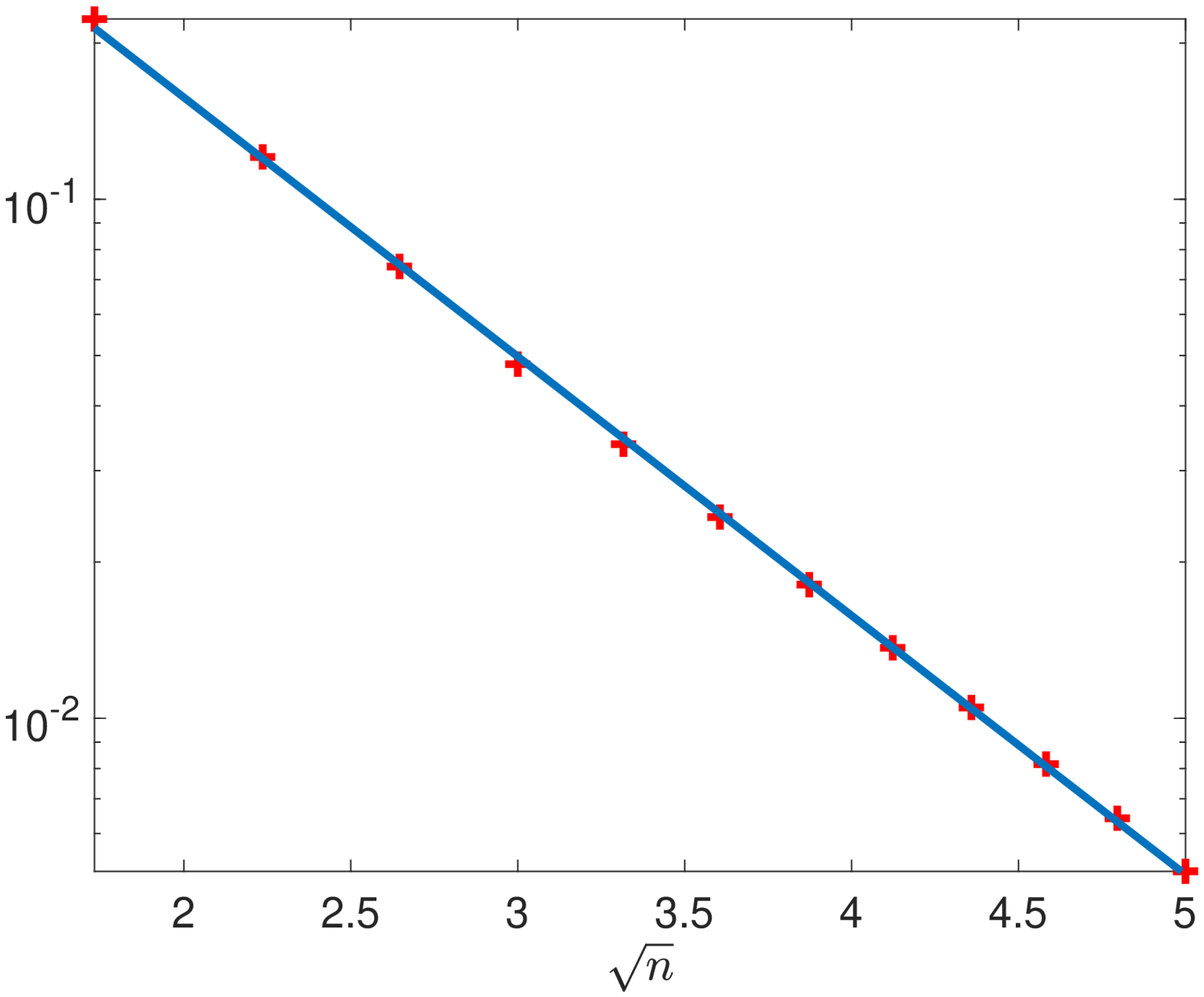}}
\subfigure[$s_2=\frac{1}{2}$, slope $-0.26$]{
\includegraphics[width=.45\textwidth]{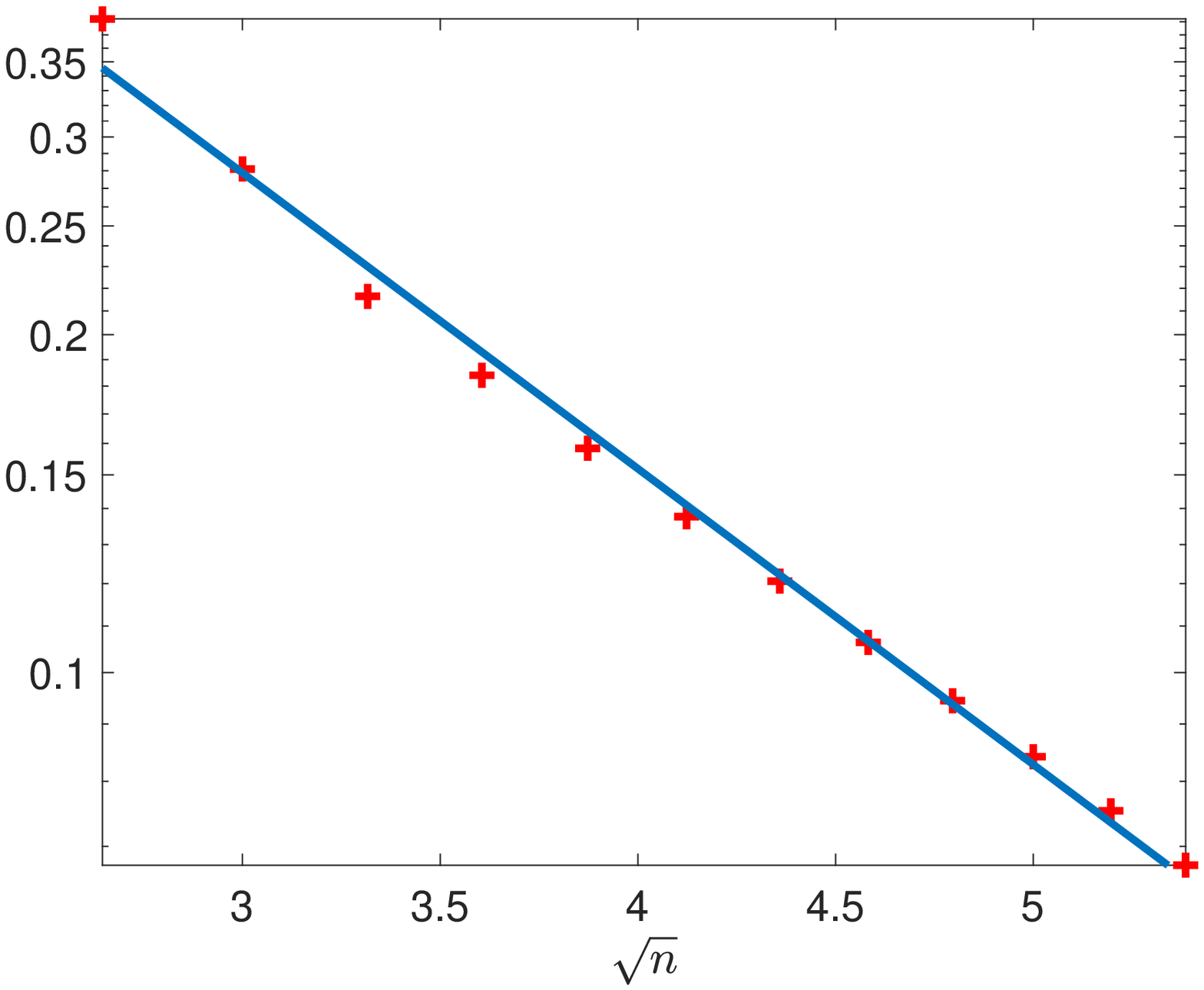}}
\caption{Logarithmic plot of the Gaussian quadrature error in $\mathbb{M}^s_{\mathrm{e}}$ with $s_1=1$ and $s_2=\frac{1}{2}$ against $\sqrt{n}$. 
The blue line is a linear least squares fit of the actual evaluation of the truncation of the worst case quadrature error \eqref{eq:remainder of wce} at $\sqrt{n}$ marked in red. 
The slope of the least squares fit is slightly steeper than $-\sqrt{2}\frac{s_1}{\sqrt{\pi}}\log_{10}(\mathrm{e})\approx -0.35$ and $-\sqrt{2}\frac{s_2}{\sqrt{\pi}}\log_{10}(\mathrm{e})\approx -0.17$, respectively.
}\label{fig:mehler 2}
\end{figure}

\section{Interlude: abstract quadrature in general Hilbert spaces}\label{sec:GHS}
In this section, we recast the quadrature problem by means of abstract
Hilbert space concepts. This reformulation will help us  to generalize
Theorem \ref{thm:alles} beyond Gaussian quadrature nodes and beyond
even parameters $\alpha$ in the Freud weight $W(x) = e^{-\pi |x|^\alpha }$. 

The general set-up is as follows: 
We assume that $\{u_k: k\in \N \}$ is an orthonormal basis for a separable Hilbert space $\mathcal{H}$ with inner product $\langle\cdot,\cdot\rangle$. For $f\in \mathcal{H}$, we define 
\begin{equation*}
\hat{f}_k:=\langle f,u_k\rangle,\qquad k\in\N.
\end{equation*}
To any sequence $\lambda=(\lambda_k)_{k\in\mathbb{N}}$ such that
$\lambda_k>0$ and $\lim \lambda _k = \infty$, we introduce the Hilbert space  
\begin{align}
\mathcal{H}_\lambda&:=\{f\in H : \sum_{k\in\N}\lambda_k |\hat{f}_k|^2  <\infty\}\label{eq:Hs2}
\end{align}
with the inner product
\begin{equation}\label{eq:inners2}
\langle f,g\rangle_{\mathcal{H}_\lambda}:=\sum_{k\in\N} \hat{f}_k \, 
\overline{\hat{g}_k} \, \lambda_k ,\qquad f,g\in \mathcal{H}_\lambda.
\end{equation}
In analogy to polynomials of degree $n$, we define the finite-dimensional subspaces 
\begin{equation*}
\Pi_n:=\spann\{u_k : 1 \leq k\leq n\} \, .
\end{equation*}
Our main assumption --- and this assumption is non-trivial --- is the
existence of (a sequence of) 
positive semi-definite sesquilinear forms $\langle \cdot ,\cdot \rangle _n$  on
$\cH _\lambda $ that are continuous (with respect to $\|\cdot\|$) 
and satisfy the following property: we  assume that there is $a_n>0$ such that 
\begin{equation}\label{eq:MZ00}
a_n \|f\|^2  \leq \|f\|^2_{n},\quad f\in\Pi_n,
\end{equation} 
where  $\|f\|_n = \langle f,f \rangle ^{1/2}_n$.  
Consequently $\langle\cdot,\cdot\rangle_n$ is an inner product on
$\Pi_n$ and we refer to it as a semi-inner product on $\cH _\lambda $. 
We may think of $\langle \cdot ,\cdot \rangle _n$ as the
discretization of some inner product via a quadrature rule of the form 
 $\langle f, g\rangle _n = \sum _{j=1}^n f(x_j)
\overline{g(x_j)} w_j$. 
The assumption~\eqref{eq:MZ00} implies that the operator $S_n: \Pi _n
\to \Pi _n$ defined by 
\begin{equation*}
S_nf =  \sum_{k=0}^n \langle f,u_k\rangle_n u_k
\end{equation*}
is invertible on $\Pi _n$. In the language of  frame theory $S_n$ is the
frame  operator associated to the basis $\{u_1,\ldots,u_n\}$ of
$\Pi _n$.

The  goal is now to  approximate the linear functional 
\begin{equation*}
\langle f,u_0\rangle
\end{equation*}
for given $f= \sum _{k=1}^\infty \hat{f}_k u_k \in \cH _\lambda $ 
by an expression containing the semi-inner product $\langle \cdot,\cdot\rangle_{n}$.  
The underlying idea is that $\langle \cdot ,\cdot \rangle _n$ is
simpler than the original inner product on $H$.

We start with some simple consequences of these definitions that are guided by analogous observations in the context of frame theory~\cite{Christensen:2003aa}, where the semi-inner products are supposed to be inner products.

\begin{lemma}\label{lemma:orth projector}
Let  $Q_n :\mathcal{H}_\lambda \rightarrow \Pi_n$ be  defined by 
\begin{equation}\label{eq:Qn}
Q_nf =
\sum_{k=0}^n \langle f,u_k\rangle_n \, S_n^{-1}u_k \, .
\end{equation}
(i) Then $Q_n$ is the orthogonal projection from $\cH _\lambda $  onto
$\Pi_n$ with respect to the semi-definite inner product
$\langle\cdot,\cdot\rangle_n$ and also satisfies  $Q_nf =\sum_{k=0}^n
\langle f,S_n^{-1}u_k\rangle_n u_k$.

(ii) $Q_nf$ is the unique solution to the optimization problem 
\begin{equation}\label{eq:argmin}
\arg\min_{p\in \Pi_n} \|f-p\|_n,\qquad \text{ for } f\in \mathcal{H}_\lambda.
\end{equation}

(iii) If $f\in \cH _\lambda $ and $g \in \Pi _n$, then
\begin{equation}
  \label{eq:cn1}
\langle Q_n f,g\rangle = \langle  f,S_n^{-1}g\rangle_n \, .  
\end{equation}
(Note the use of different inner products on both sides!)
 \end{lemma}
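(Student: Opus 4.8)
The plan is to verify the three assertions of Lemma~\ref{lemma:orth projector} directly from the definitions of $S_n$ and $Q_n$, using only finite-dimensional linear algebra on $\Pi_n$ together with the nondegeneracy guaranteed by~\eqref{eq:MZ00}. The key point that makes everything work is that $\langle\cdot,\cdot\rangle_n$, while only semi-definite on all of $\cH_\lambda$, is a genuine inner product when restricted to $\Pi_n$, and that $S_n$ is the associated Gram/frame operator of the basis $\{u_1,\dots,u_n\}$ relative to this inner product, hence self-adjoint and positive definite on $\Pi_n$.

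First I would record the basic identity $\langle S_n u_j, u_k\rangle_n$ versus $\langle u_j, u_k\rangle_n$ and, more usefully, the duality relation: for $p,q\in\Pi_n$ one has $\langle S_n^{-1}p,q\rangle_n = \langle p, S_n^{-1}q\rangle_n$ (self-adjointness of $S_n^{-1}$ with respect to $\langle\cdot,\cdot\rangle_n$ on $\Pi_n$), and the reproducing-type formula $\langle p, S_n^{-1}u_k\rangle_n = \hat p_k$ for $p\in\Pi_n$, where $\hat p_k = \langle p,u_k\rangle$. This last formula is exactly the statement that $\{S_n^{-1}u_k\}$ and $\{u_k\}$ are dual bases of $\Pi_n$ with respect to $\langle\cdot,\cdot\rangle_n$, and it follows by expanding $p=\sum_j \hat p_j u_j$ and using the definition of $S_n$. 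With these in hand, part~(i) is immediate: for $f\in\cH_\lambda$ and $g=\sum_k\hat g_k u_k\in\Pi_n$, compute $\langle Q_nf, g\rangle_n = \sum_{k,l}\langle f,u_k\rangle_n \hat g_l \langle S_n^{-1}u_k,u_l\rangle_n = \sum_k \langle f,u_k\rangle_n \hat g_k = \langle f,g\rangle_n$, where the middle step uses the dual-basis identity; this shows $f-Q_nf\perp_n\Pi_n$, so $Q_n$ is the $\langle\cdot,\cdot\rangle_n$-orthogonal projection onto $\Pi_n$. The alternative formula $Q_nf=\sum_k\langle f,S_n^{-1}u_k\rangle_n u_k$ follows by applying the just-proved projection property to test against each $u_j$ and matching coefficients, or equivalently by the symmetry of $S_n^{-1}$.

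Part~(ii) is the standard consequence that an orthogonal projection realizes the minimal distance: for any $p\in\Pi_n$, $\|f-p\|_n^2 = \|f-Q_nf\|_n^2 + \|Q_nf-p\|_n^2$ by the $\langle\cdot,\cdot\rangle_n$-orthogonality of $f-Q_nf$ to $Q_nf-p\in\Pi_n$, so the minimum over $\Pi_n$ is attained uniquely at $p=Q_nf$; uniqueness uses that $\|\cdot\|_n$ is a genuine norm on $\Pi_n$, which is where~\eqref{eq:MZ00} enters. For part~(iii), I would compute $\langle Q_nf,g\rangle$ for $g\in\Pi_n$ by writing $g=\sum_k\hat g_k u_k$ and $Q_nf=\sum_l\langle f,u_l\rangle_n S_n^{-1}u_l$; since $\langle S_n^{-1}u_l, u_k\rangle$ is not as clean as the $\langle\cdot,\cdot\rangle_n$ pairing, the cleaner route is: $\langle Q_nf,g\rangle = \sum_k (\widehat{Q_nf})_k\,\overline{\hat g_k}$, and $(\widehat{Q_nf})_k = \langle Q_nf,u_k\rangle = \langle f, S_n^{-1}u_k\rangle_n$ by the second formula in~(i) together with the dual-basis identity; summing against $\hat g_k$ and recollecting gives $\langle f, S_n^{-1}g\rangle_n$, using linearity of $\langle\cdot,\cdot\rangle_n$ in the second slot and $S_n^{-1}g = \sum_k \hat g_k S_n^{-1}u_k$. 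I do not expect a serious obstacle here; the only subtlety to handle with care is keeping straight \emph{which} inner product each bracket refers to and the fact that $\langle\cdot,\cdot\rangle_n$ need not be definite off $\Pi_n$, so that all symmetry and invertibility arguments must be confined to $\Pi_n$, where~\eqref{eq:MZ00} licenses them.
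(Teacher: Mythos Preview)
Your proof is correct and follows essentially the same frame-theoretic route as the paper: where the paper writes $Q_n=S_n^{-1}\tilde S_n$ (with $\tilde S_n f=\sum_{k}\langle f,u_k\rangle_n u_k$ on all of $\cH_\lambda$), checks $Q_n|_{\Pi_n}=\mathrm{Id}$, and then identifies the kernel, you organize the same content around the biorthogonality $\langle S_n^{-1}u_k,u_j\rangle_n=\delta_{jk}$ and verify $\langle f-Q_nf,g\rangle_n=0$ for $g\in\Pi_n$ directly. Parts~(ii) and~(iii) are handled identically in both arguments.
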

\begin{proof}
(i)  Define the extension of $S_n$ to $\cH _\Lambda $ by  $\tilde{S}_n:\mathcal{H}_\lambda \rightarrow \Pi_n$ by
\begin{equation*}
\tilde{S}_n f = \sum_{k=0}^n \langle f,u_k\rangle_n u_k.
\end{equation*}
Then  $Q_n =S_n^{-1}\tilde{S}_n$ is defined on all of $\cH _\Lambda
$. Its restriction to $\Pi _n$ is $Q_n|_{\Pi _n} =
S_n^{-1}\tilde{S}_n|_{\Pi _n} = S_n \inv S_n = \mathrm{Id}_{\Pi
  _n}$. Thus $Q_n$ is the  identity on $\Pi_n$, and  consequently, 
$Q_n^2 = Q_n$ and  $Q_n$ is a projection. The kernel 
of $Q_n$ equals the orthogonal complement of $\Pi_n$ with respect to
$\langle\cdot,\cdot\rangle_n$, so $Q_n$ is the orthogonal projection
with respect to $\langle \cdot ,\cdot \rangle _n$. 

Next  we observe that
$\tilde{S}_n$ and $S_n$ are self-adjoint with respect to
$\langle\cdot,\cdot\rangle_n$. 

The alternative representation of $Q_nf$ follows from $Q_nf = 0$ for
$f\in \Pi _n^\perp$ and $Q_nf =  S_n S_n \inv f = \sum _{k=1}^n
\langle S_n\inv f, u_k \rangle_n u_k$ and the self-adjointness of
$S_n\inv $ (of course, this is just the usual formalism of frame
theory~\cite{Christensen:2003aa}).

(ii) follows because  $Q_n$ is the orthogonal projection from $\cH
_\Lambda $ onto $\Pi _n$ with respect to $\langle \cdot , \cdot
\rangle _n$.

(iii) By definition of $Q_n$ and (i)  we obtain
$$
\langle Q_n f , g \rangle = \sum _{k=1}^n \langle f, S_n\inv u_n \rangle
_n \langle u_k , g\rangle = \langle f, S_n\inv g \rangle _n \, , 
$$
since $g =\sum _{k=1}^n\langle g,u_k\rangle u_k \in \Pi _n$. 
\end{proof}
Coming back to abstract quadrature, we may take $\langle Q_n f, u_0\rangle$ as an
approximation for the functional $\langle f,u_0\rangle$. In view of the identity 
\begin{equation*}
\langle Q_n f,u_0\rangle = \langle  f,S_n^{-1}u_0\rangle_n,
\end{equation*}
from \eqref{eq:cn1},   this means 
 that $\langle  f,S_n^{-1}u_0\rangle_n$ is an approximation of
 $\langle f, u_0\rangle$ that requires only  knowledge of
 $u_0,\ldots,u_n$ and the ``simpler'' inner product
 $\langle\cdot,\cdot\rangle_n$, but we do not need to evaluate
 $\langle\cdot,\cdot\rangle$ directly. In this sense $\langle
 f,S_n^{-1}u_0\rangle_n$ is an abstract quadrature rule. 
 
To quantify the approximation error, we introduce the error function 
\begin{equation} \label{errorf}
\phi_\lambda(n):=\sum_{k=n+1}^\infty  \lambda_k^{-1}\| u_k\|^2_n  \in[0,\infty].
\end{equation}
The  worst case error for the abstract quadrature rule is then given
by the following estimate.

\begin{tm}\label{thm:quadrature}
We have
\begin{equation*}
\sup_{\substack{f\in \mathcal{H}_\lambda\\ \|f\|_{\mathcal{H}_\lambda}\leq 1}}|\langle f,u_0\rangle-\langle f,S_n^{-1}u_0\rangle_n|^2  
\leq  \frac{\phi_\lambda(n)}{ a_n}.
\end{equation*} 
\end{tm}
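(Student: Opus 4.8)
The plan is to compute the squared norm of the error functional by expanding it in the orthonormal basis $\{\lambda_k^{-1/2}u_k : k\in\N\}$ of $\mathcal H_\lambda$ — exactly as in the proof of Theorem~\ref{thm:alles} — and then to bound the resulting tail sum using the definition of $\phi_\lambda(n)$ together with the coercivity~\eqref{eq:MZ00}. Set
\[
T(f) := \langle f,u_0\rangle - \langle f, S_n^{-1}u_0\rangle_n .
\]
By Lemma~\ref{lemma:orth projector}(iii), i.e.\ \eqref{eq:cn1} with $g = u_0$, we have $\langle f, S_n^{-1}u_0\rangle_n = \langle Q_n f, u_0\rangle$, so $T(f) = \langle f - Q_n f, u_0\rangle$; this is a bounded linear functional on $\mathcal H_\lambda$ (both terms are continuous, the second because $\langle\cdot,\cdot\rangle_n$ is continuous and $\mathcal H_\lambda$ embeds continuously into $\mathcal H$, as $\lambda_k\to\infty$ forces $\inf_k\lambda_k>0$). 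Since $\{\lambda_k^{-1/2}u_k\}_{k\in\N}$ is an orthonormal basis of $\mathcal H_\lambda$, Riesz representation gives
\[
\|T\|^2 = \sum_{k\in\N}\lambda_k^{-1}\,|T(u_k)|^2 ,
\]
and the left-hand side of the theorem equals $\|T\|^2$.

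Next I would evaluate $T(u_k)$. For $k\le n$ we have $u_k\in\Pi_n$, hence $Q_n u_k = u_k$ and $T(u_k) = 0$, so the sum starts at $k=n+1$. For $k\ge 1$ we have $\langle u_k,u_0\rangle = 0$ by orthonormality of $\{u_k\}$ in $\mathcal H$, so for $k>n$, using \eqref{eq:cn1} once more,
\[
T(u_k) = -\langle Q_n u_k, u_0\rangle = -\langle u_k, S_n^{-1}u_0\rangle_n .
\]
Cauchy--Schwarz for the positive semi-definite form $\langle\cdot,\cdot\rangle_n$ then gives $|T(u_k)|^2 \le \|u_k\|_n^2\,\|S_n^{-1}u_0\|_n^2$, whence, by the definition~\eqref{errorf} of $\phi_\lambda(n)$,
\[
\|T\|^2 = \sum_{k>n}\lambda_k^{-1}\,|T(u_k)|^2 \le \|S_n^{-1}u_0\|_n^2\sum_{k>n}\lambda_k^{-1}\|u_k\|_n^2 = \|S_n^{-1}u_0\|_n^2\,\phi_\lambda(n) .
\]

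It remains to prove $\|S_n^{-1}u_0\|_n^2 \le a_n^{-1}$. Here I would first record the identity $\langle S_n g, g\rangle = \|g\|_n^2$ for $g\in\Pi_n$, which is immediate from the definition $S_n g = \sum_k\langle g,u_k\rangle_n u_k$ and the orthonormality of $\{u_k\}$ in $\mathcal H$. Combined with~\eqref{eq:MZ00}, this says that $S_n$, viewed as a self-adjoint positive operator on the finite-dimensional space $\Pi_n$, satisfies $S_n\ge a_n\,\mathrm{Id}_{\Pi_n}$, and therefore $\|S_n^{-1}\|_{\mathrm{op}}\le a_n^{-1}$. Applying the recorded identity with $g = S_n^{-1}u_0\in\Pi_n$ gives $\|S_n^{-1}u_0\|_n^2 = \langle u_0, S_n^{-1}u_0\rangle \le \|S_n^{-1}\|_{\mathrm{op}}\,\|u_0\|^2 \le a_n^{-1}$, since $\|u_0\| = 1$. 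Combining with the previous display yields $\|T\|^2\le\phi_\lambda(n)/a_n$, which is the assertion.

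The routine parts are the orthonormal-basis expansion of $T$ and the two (semi-)Cauchy--Schwarz steps. The step demanding the most care is the last paragraph, namely converting the coercivity bound~\eqref{eq:MZ00} for the semi-inner product into the operator bound $\|S_n^{-1}\|_{\mathrm{op}}\le a_n^{-1}$ via $\langle S_n g,g\rangle = \|g\|_n^2$, while keeping in mind that $\langle\cdot,\cdot\rangle_n$ is merely positive semi-definite on all of $\mathcal H_\lambda$ and definite (so that $S_n$ is invertible) only on $\Pi_n$; a minor additional point is to justify that $T$ extends to a bounded functional on $\mathcal H_\lambda$, which follows from the continuity of $\langle\cdot,\cdot\rangle_n$ and the continuous embedding $\mathcal H_\lambda\hookrightarrow\mathcal H$.
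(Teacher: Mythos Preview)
Your proof is correct and follows essentially the same route as the paper: expand the error functional in the orthonormal basis $\{\lambda_k^{-1/2}u_k\}$, use \eqref{eq:cn1} to see that the sum starts at $k=n+1$, apply Cauchy--Schwarz for $\langle\cdot,\cdot\rangle_n$, and then bound $\|S_n^{-1}u_0\|_n^2\le a_n^{-1}$. The only variation is in this last step: the paper works with the self-adjointness of $S_n$ with respect to $\langle\cdot,\cdot\rangle_n$ and its square root $S_n^{-1/2}$ (via $\|S_n^{-1/2}g\|_n^2=\langle g,S_n^{-1}g\rangle_n=\|g\|^2$), whereas you use the identity $\langle S_n g,g\rangle=\|g\|_n^2$ to view $S_n\ge a_n\,\mathrm{Id}$ on $(\Pi_n,\langle\cdot,\cdot\rangle)$ and conclude $\|S_n^{-1}u_0\|_n^2=\langle u_0,S_n^{-1}u_0\rangle\le a_n^{-1}$; both are valid and equally short.
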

\begin{proof}
We observe that  $Tf =  \langle f,u_0\rangle - \langle f,S_n^{-1}u_0\rangle_n$ is
a continuous linear functional  on $\mathcal{H}_\lambda$. As in the
proof of Theorem~\ref{thm:alles},   $T$ can be
identified with a vector in $\cH _\lambda $ by the Riesz representation theorem. Since
$\{\lambda_k^{-\frac{1}{2}}u_k\}_{k\in\mathbb{N}}$ is an orthonormal
basis for $\mathcal{H}_\lambda$, the  norm of the functional $T$ is
given by
$\|T\|_{\cH _\Lambda }^2 = \sum _{k\in \bN } \lambda _k\inv
|T(u_k)|^2$. This leads to 
\begin{align*}
 \|T\|_{\cH _\lambda }^2 &= \sup_{\substack{f\in \mathcal{H}_\lambda\\
  \|f\|_{\mathcal{H}_\lambda}\leq 1}} \left|\langle f,u_0\rangle  - \langle f,S_n^{-1}u_0\rangle_n \right|^2 
    \\
  &= \sum_{k=0}^\infty\lambda_k^{-1} \left|\langle u_k,u_0\rangle - \langle u_k,S_n^{-1}u_0\rangle_n  \right|^2\\
& = \sum_{k=n+1}^\infty \lambda_k^{-1} |\langle u_k,S_n^{-1}u_0\rangle_n|^2\\
& \leq \sum_{k=n+1}^\infty \lambda_k^{-1} \| u_k\|_n^2 \|S_n^{-1}u_0\|_n^2\\
&= \|S_n^{-1} u_0\|_n^2 \, \phi _\lambda (n) \, .
\end{align*}
In this chain of identities we have used \eqref{eq:cn1} in the form
$\langle u_k, S_n\inv u_0 \rangle _n = \langle Q_n u_k, u_0 \rangle =
\langle u_k, u_0\rangle = \delta _{k,0}$ for $k \leq n$.

To derive the bound $\|S_n^{-1}u_0\|_n^2\leq a_n^{-1}$, we recall
that $S_n$ is self-adjoint with respect to~$\langle
\cdot,\cdot\rangle_n$ and positive semi-definite. Hence, the same
holds for $S_n^{-1}$ and its square root.  Using \eqref{eq:cn1} again for $g\in \Pi_n$, we obtain 
\begin{align*}
\|S_n^{-1/2} g\|_n^2 = \langle S_n^{-1/2} g,S_n^{-1/2}g\rangle_n =
  \langle g,S_n^{-1}g\rangle_n = \langle g,g\rangle=\|g\|^2\leq
  a_n^{-1}\|g\|_n^2 \, .
\end{align*}
This implies  
\begin{equation*}
\|S_n^{-1}u_0\|_n^2 \leq a_n^{-1} \|S_n^{-1/2}u_0\|_n^2 =  a_n^{-1} \|u_0\|^2=a_n^{-1},
\end{equation*}
which completes the proof.
\end{proof}
\begin{rem}
We may replace $u_0$ with any element $g\in \Pi_n$ and obtain
\begin{equation*}
\sup_{\substack{f\in \mathcal{H}_\lambda\\ \|f\|_{\mathcal{H}_\lambda}\leq 1}}|\langle f,g\rangle-\langle f,S_n^{-1}g\rangle_n|^2 \leq  \frac{\phi_\lambda(n)}{a_n}\|g\|^2.
\end{equation*} 
If $a_n=1$ and \eqref{eq:MZ00} is replaced by the identity $\|f\|^2 =
\|f\|_n^2$, then $S_n$ is the identity operator, and  
\begin{equation*}
\sup_{\substack{f\in \mathcal{H}_\lambda\\ \|f\|_{\mathcal{H}_\lambda}\leq 1}}|\langle f,g\rangle-\langle f,g\rangle_n|^2 \leq  \phi_\lambda(n)\|g\|^2.
\end{equation*} 
\end{rem}
\section{The use of MZ-inequalities in quadrature}\label{sec:MZ}
In this section we extend the error estimates of
Theorem~\ref{thm:alles} considerably so that  they hold (i) for more
general point sets  beyond Gaussian quadrature nodes and (ii)  for the
full range of Freud weights $W(x) = e^{-\pi |x|^\alpha }$ for all
$\alpha> 1$. For this we will apply the results  about abstract
quadrature rules from Section~\ref{sec:GHS}.

Given the weight $W(x) = e^{-\pi |x|^\alpha }$ and the associated
orthogonal polynomials $H_k, k=0,\dots $, we choose the orthonormal
basis 
\begin{equation*}
u_k=h_{k}= H_k W, \quad k\in\mathbb{N}
\end{equation*} 
as the orthonormal basis for $L^2(\bR )$. Let us emphasize again  that
$h_k$ depends on $\alpha$.
The sequence of weights for the definition of the associated Sobolev
spaces $\cH _\lambda $ are either of  polynomial type $\lambda _k = (1+k)^s$ or  of
exponential type $\lambda _k = e^{qk^p}$. At least for $\alpha = 2$, these
abstractly defined spaces coincide with a class of well-known
function spaces in analysis, see Section~\ref{sec:modulation}. 

The associated
sequence of finite-dimensional subspaces are the spaces  $\Pi _n =
\mathrm{span} \{1, x, \dots , x^n\} W$ consisting of weighted
polynomials of degree $\leq n$. 

Next we consider a sequence of nodes $X_n\subseteq \bR $ and of
nonnegative weights $\{\tau(x)\}_{x\in X_n}$ and for every $n$ define
a semi-inner product by 
\begin{equation}\label{eq:alpha inner}
\langle f,g\rangle_{n}=\sum_{x\in X_n} \tau(x) f(x)g(x),\qquad \|f\|_{n}^2 = \langle f,f\rangle_{n}.
\end{equation}
Here is the decisive condition: $X_n$ must be a \mz\ family
and  every $X_n$ must satisfy a \mz\
inequality (in different terminology: a sampling inequality), i.e.,
there exist  $0<a_{n}\leq b_{n}<\infty$ such that  
\begin{equation} \label{eq:MZ new}
a_{n} \|f\|^2_{L^2(\mathbb{R})}\leq \sum _{x\in X_n} |f(x)|^2 \tau (x)
= \|f\|_{n}^2\leq b_{n}\|f\|^2_{L^2(\mathbb{R})},\qquad \text{ for all
} f\in \Pi_{n} \, .
\end{equation}
Assumption~\eqref{eq:MZ new} is non-trivial. In view of the vast
literature on \mz\ inequalities we take the existence of \mz\ families
for granted. For general constructions, we refer to~\cite{Filbir:2011fk,Lubinsky:1996aa,Mhaskar:2002ys}, for a
simple direct derivation of quadrature rules from \mz\ inequalities see~\cite{Grochenig:2019mq}.

To apply Theorem~\ref{thm:quadrature} we need one more property of the
orthogonal polynomials associated to the Freud weights. Let $m_n$ be 
the Mhaskar-Rahmanov-Saff numbers defined by 
\begin{equation*}
m_{n}=m_{n,\alpha }= \frac{2}{\sqrt{\pi}}\left(\frac{\Gamma(\frac{\alpha}{2})^2}{4\Gamma(\alpha)}\right)^{\frac{1}{\alpha}}
n^{\frac{1}{\alpha}}, 
\end{equation*}
cf.~\cite{Levin:1998wj}. Then, according to \cite{Levin:1992mb}, there
is $L>0$ such that the zeros of $h_n$, which are the  nodes for Gauss
quadrature chosen in  Section \ref{sec:GQ},  are contained in an
interval bounded by $m_{n}(1+Ln^{-\frac{2}{3}})$. It is therefore
natural to assume that a \mz\ family used for a quadrature rule should
satisfy a similar restriction. We may, for instance, think of $X_n$ to
be distorted Gaussian nodes, so that the following result can  be read
 as a stability result for Gaussian quadrature. 

We now state the generalization of  Theorem~\ref{thm:alles} beyond Gaussian nodes and beyond even $\alpha$.
\begin{tm}\label{tm:H with MZ}
Let $W(x) = e^{-\pi |x|^\alpha }$ for $\alpha >1$ arbitrary, and let
$\mathbb{H}^s$ and $\mathbb{E}^p_q$ be the associated Sobolev 
spaces. Suppose that the \mz\ inequalities \eqref{eq:MZ new} hold for
$X_n\subseteq \bR $  and assume that, for some $L>0$,  
\begin{equation}\label{eq:nodes hermite still bounded}
\max _{x\in X_n} |x|\leq m_{n,\alpha}(1+Ln^{-\frac{2}{3}}) \, .
\end{equation}
Define the quadrature weights 
\begin{equation}\label{eq:omega from tau}
\omega(x)=\tau(x)(S_{n}^{-1}W)(x),\qquad x\in X_n \, .
\end{equation}
Then we have, for $s>1-\frac{1}{\alpha}$ and $p,q>0$,
\begin{align*}
\sup_{\substack{f\in \mathbb{H}^s\\ \|f\|_{\mathbb{H}^s}\leq 1}}\left|\int_{-\infty}^\infty f(x)W(x)\mathrm{d}x-\sum_{x\in X_n}\omega(x) f(x) \right|^2  & \lesssim   \frac{b_{n}}{a_{n}} n^{-s+\frac{4}{3}},\\
\sup_{\substack{f\in \mathbb{E}^{p}_q\\ \|f\|_{\mathbb{E}^{p}_q}\leq 1}}\left|\int_{-\infty}^\infty f(x)W(x)\mathrm{d}x-\sum_{x\in X_n} \omega(x) f(x) \right|^2 &\lesssim   \frac{b_{n}}{a_{n}} \mathrm{e}^{-q n^p} \cdot 
n^{\frac{2}{3}+\max(1-p,0)}
\end{align*}
with a constant independent of $n$. 
\end{tm}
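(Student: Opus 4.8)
The plan is to apply the abstract quadrature machinery of Section~\ref{sec:GHS} with $\mathcal{H} = L^2(\bR)$, $u_k = h_k$, and $\lambda_k$ equal to $(1+k)^s$ or $e^{qk^p}$. First I would check that the semi-inner products $\langle f,g\rangle_n = \sum_{x\in X_n}\tau(x)f(x)g(x)$ from \eqref{eq:alpha inner} really satisfy the hypotheses of Theorem~\ref{thm:quadrature}: the lower bound $a_n\|f\|_2^2\le\|f\|_n^2$ for $f\in\Pi_n$ is exactly the left-hand \mz\ inequality in \eqref{eq:MZ new}, and continuity on $\cH_\lambda$ (indeed boundedness on $\Pi_n$) comes from the right-hand inequality $\|f\|_n^2\le b_n\|f\|_2^2$. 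One subtlety is that the abstract functional being approximated is $\langle f,u_0\rangle$, whereas here we want $\int f W\,dx$; but since $h_0 = c_0 W$ with $c_0 = \big(\int W^2\big)^{-1/2}$, we have $\int f(x)W(x)\,dx = c_0^{-1}\langle f,h_0\rangle$, and likewise $\langle f,S_n^{-1}u_0\rangle_n = c_0^{-1}\sum_{x\in X_n}\tau(x)(S_n^{-1}W)(x)f(x) = c_0^{-1}\sum_{x\in X_n}\omega(x)f(x)$ by the definition \eqref{eq:omega from tau}. Thus the quantity in the theorem equals $c_0^{-2}$ times the bound of Theorem~\ref{thm:quadrature}, and $c_0^{-2} = \int W^2 = \int e^{-2\pi|x|^\alpha}\,dx$ is an $\alpha$-dependent constant, absorbed into $\lesssim$.

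With that reduction in place, Theorem~\ref{thm:quadrature} gives the upper bound $\phi_\lambda(n)/a_n$, where $\phi_\lambda(n) = \sum_{k=n+1}^\infty \lambda_k^{-1}\|h_k\|_n^2$ and $\|h_k\|_n^2 = \sum_{x\in X_n}\tau(x)|h_k(x)|^2$. So the whole problem reduces to estimating $\sum_{k=n+1}^\infty \lambda_k^{-1}\sum_{x\in X_n}\tau(x)|h_k(x)|^2$. The natural move is to exchange the order of summation and write this as $\sum_{x\in X_n}\tau(x)\big(\sum_{k=n+1}^\infty \lambda_k^{-1}|h_k(x)|^2\big)$, then apply Proposition~\ref{prop:new one again} to the inner sum, which is bounded \emph{uniformly in $x$} by $(1+n)^{-s+1-1/\alpha}$ in the polynomial case and by $e^{-qn^p}(1+n)^{1/3-1/\alpha+\max(1-p,0)}$ in the exponential case. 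That leaves the factor $\sum_{x\in X_n}\tau(x)$, which must be controlled by the \mz\ upper bound together with the node-spread hypothesis \eqref{eq:nodes hermite still bounded}: one needs a function $g\in\Pi_n$ with $g\gtrsim 1$ on the interval $[-m_{n,\alpha}(1+Ln^{-2/3}),\, m_{n,\alpha}(1+Ln^{-2/3})]$ containing $X_n$, so that $\sum_{x\in X_n}\tau(x)\lesssim \sum_{x\in X_n}\tau(x)|g(x)|^2 = \|g\|_n^2 \le b_n\|g\|_2^2$. A convenient choice is the reproducing-kernel diagonal combined with the Christoffel-function lower bound from \cite{Levin:1992mb}, or simply $g = h_0 = c_0 W$ rescaled appropriately; but $W$ is tiny near $m_{n,\alpha}$, so instead one uses that $\sum_{k=0}^n |h_k(x)|^2 = \Lambda_n(x)^{-1}\gtrsim W(x)^{-2}\cdot(\text{something})$ on the Mhaskar-Rahmanov-Saff interval, giving $\sum_{x\in X_n}\tau(x)\lesssim m_{n,\alpha}\asymp n^{1/\alpha}$ after tracking the Christoffel-function asymptotics. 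Combining $\phi_\lambda(n)\lesssim n^{1/\alpha}\cdot(1+n)^{-s+1-1/\alpha} = n^{-s+1}$ in the polynomial case — wait, that is better than claimed, so the extra $n^{1/3}$ must come from a cruder bound on $\sum\tau(x)$; more honestly, bounding $\sum_{x\in X_n}\tau(x)\lesssim b_n\|g\|_2^2$ with $g$ a degree-$n$ polynomial that is $\asymp 1$ on the whole support interval forces $\|g\|_2^2$ to scale like $m_{n,\alpha}\cdot(\text{polynomial overshoot})$, and the $n^{2/3}$ in the final exponent of the exponential case (resp.\ the jump from $1-1/\alpha$ to $4/3$ in the polynomial case, a loss of $s - 4/3 + s - 1 +1/\alpha$... i.e.\ an extra $n^{1/3+1/\alpha}$) reflects exactly this overshoot from the $Ln^{-2/3}$-enlargement of the interval, via Bernstein-type or Remez-type estimates for weighted polynomials.

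The main obstacle, then, is precisely this last step: controlling $\sum_{x\in X_n}\tau(x) = \|\mathbf{1}\|_n^2$ (or rather, bounding it by $\|g\|_n^2$ for a suitable $g\in\Pi_n$ and then by $b_n\|g\|_2^2$) when $X_n$ may extend slightly past the Mhaskar-Rahmanov-Saff number $m_{n,\alpha}$. The trick is to choose $g\in\Pi_n$, e.g.\ a partial sum of the reproducing kernel or a carefully scaled weighted Chebyshev-type polynomial, that stays $\asymp 1$ on the enlarged interval $[-m_{n,\alpha}(1+Ln^{-2/3}), m_{n,\alpha}(1+Ln^{-2/3})]$ while having $L^2$-norm controlled by a power of $n$; the deep size estimates for Freud orthogonal polynomials and Christoffel functions from \cite{Levin:1992mb,Levin:1998wj} are exactly what make this possible, and the exponent $\tfrac{4}{3}$ (resp.\ the $n^{2/3}$ factor) is the price of passing from the interval of exactness $[-m_{n,\alpha},m_{n,\alpha}]$ to its $n^{-2/3}$-dilation, as anticipated in the introduction where the authors note that $\tfrac{4}{3}$ is ``probably an artifact of this generality.'' Once $\sum_{x\in X_n}\tau(x)\lesssim b_n\, n^{1/\alpha + 1/3}$ is established, Proposition~\ref{prop:new one again} and Theorem~\ref{thm:quadrature} assemble into the stated bounds, with the $b_n/a_n$ factor appearing from $a_n^{-1}$ in Theorem~\ref{thm:quadrature} and $b_n$ in the estimate for $\sum\tau(x)$.
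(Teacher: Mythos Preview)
Your overall strategy is exactly the paper's: reduce to Theorem~\ref{thm:quadrature}, interchange the sums in $\phi_\lambda(n)$, apply Proposition~\ref{prop:new one again} uniformly in $x$, and then everything comes down to bounding $\sum_{x\in X_n}\tau(x)$. Your treatment of the normalization $h_0=c_0W$ is actually more careful than the paper's, which silently absorbs the constant.

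The gap is in the last step. You try to manufacture a single $g\in\Pi_n$ with $|g|\gtrsim 1$ on the enlarged interval and controlled $L^2$-norm, invoke vague Bernstein/Remez-type ``polynomial overshoot,'' and end up guessing the target exponent rather than deriving it. That route is harder than necessary and you do not carry it through. The paper's trick avoids any such $g$: apply the \emph{upper} \mz\ inequality to each $h_k$, $k=0,\dots,n$, and sum over $k$ to obtain
\[
\sum_{x\in X_n}\tau(x)\,\Lambda_n^{-1}(x)
=\sum_{k=0}^n\sum_{x\in X_n}\tau(x)\,|h_k(x)|^2
\le b_n(n+1).
\]
Now use the Christoffel function multiplicatively as a pivot:
\[
\sum_{x\in X_n}\tau(x)
=\sum_{x\in X_n}\tau(x)\,\Lambda_n^{-1}(x)\,\Lambda_n(x)
\le b_n(n+1)\,\sup_{x\in X_n}\Lambda_n(x).
\]
The Levin--Lubinsky upper bound $\Lambda_n(x)\lesssim n^{1/\alpha-2/3}$ holds precisely on $|x|\le m_{n,\alpha}(1+Ln^{-2/3})$, which is exactly why the hypothesis~\eqref{eq:nodes hermite still bounded} is imposed; it is not there to control a Remez-type overshoot. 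This gives $\sum_{x\in X_n}\tau(x)\lesssim b_n\,n^{1/\alpha+1/3}$ directly, and plugging into your earlier estimates finishes the proof. You had all the ingredients in hand (you mention both $\Lambda_n^{-1}=\sum_k|h_k|^2$ and the Christoffel bounds) but did not assemble them this way.
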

\begin{proof}
According to Theorem \ref{thm:quadrature}, we need  to derive suitable
bounds for the error function $\phi$ defined in~\eqref{errorf} for the
weight sequences $\lambda _k = (1+k)^s$ and $\lambda _k=
e^{qk^p}$.  For $\mathbb{H}^s$ and $\mathbb{E}^p_q$ these are 
\begin{align*}
\phi^s(n) & =\sum_{k=n+1}^\infty (1+k)^{-s} \sum_{x\in X_n}\tau(x) |h_{k}(x)|^2 \\
\phi^{p}_q(n)& = \sum_{k=n+1}^\infty \mathrm{e}^{-q k^p} \sum_{x\in X_n}\tau(x) |h_k(x)|^2.
\end{align*}
By interchanging the summation and applying Proposition \ref{prop:new one again}, we obtain
\begin{align}
\phi^s(n)  & =\sum_{x\in X_n}\tau(x) \sum_{k=n+1}^\infty (1+k)^{-s} 
             |h_{k}(x)|^2 \lesssim  \Big( \sum_{x\in X_n}\tau(x) \Big) \,  n^{-s+1-\frac{1}{\alpha}},\label{eq:phi 01}\\
\phi^{p}_q(n)& =\sum_{x\in X_n}\tau(x) \sum_{k=n+1}^\infty
               \mathrm{e}^{-q k^p}  |h_k(x)|^2\lesssim  \Big(\sum_{x\in
               X_n}\tau(x)\Big) \,   \mathrm{e}^{-q n^p} \cdot  n^{\frac{1}{3}-\frac{1}{\alpha}+\max(1-p,0)}.\label{eq:phi 02}
\end{align}
 We still  need a  bound for $\sum_{x\in X_n}\tau(x)$. 
We apply the \mz\ inequality \eqref{eq:MZ new} to each $h_k, k\leq n,$ and
recall  that $\sum _{k=0}^n h_k^2 = \Lambda _n\inv$ is the reciprocal of
the Christoffel function. We obtain 
$$
\sum_{x\in X_n}\tau(x)h_k(x)^2 \leq b_{n}
\int_{-\infty}^\infty h_k(x)^2\mathrm{d}x  =b_{n},
$$
so that 
\begin{equation}\label{eq:upper MZ used here}
\sum_{x\in X_n}\tau(x)\Lambda^{-1}_{n}(x) \leq b_{n} \int_{-\infty}^\infty \Lambda^{-1}_{n}(x)\mathrm{d}x  =b_{n}(n+1).
\end{equation}
Consequently
\begin{align} \label{eq:sum of tau weights} 
\sum_{x\in X_n}\tau(x)  =  \sum_{x\in X_n}\tau(x) \Lambda^{-1}_n(x)\Lambda_n(x)
 \leq  b_{n}(n+1) \sup_{x\in X_n}\Lambda_n(x).
\end{align}
With the hypothesis $|x|\leq m_{n} (1+Ln^{-\frac{2}{3}})$ in~\eqref{eq:nodes hermite still bounded}, the estimates
for $\Lambda _n$   
from \cite{Levin:1992mb} imply that 
\begin{equation}\label{eq:christ for hermite}
\Lambda_{n}(x) \lesssim  n^{\frac{1}{\alpha}-\frac{2}{3}},
\end{equation}
where the constant may depend on $\alpha$ and $L$. Hence, we obtain
\begin{equation}\label{eq:cn bound}
\sum_{x\in X_n}\tau(x) \lesssim  b_{n} n^{\frac{1}{\alpha}+\frac{1}{3}},
\end{equation}
which leads to 
\begin{align*}
\phi^s(n) & \lesssim b_{n} n^{-s+\frac{4}{3}},\\
\phi^{p}_q(n)& \lesssim b_{n} \mathrm{e}^{-q n^p} \cdot  n^{\frac{2}{3}+\max(1-p,0)}.
\end{align*}
This concludes the proof since all assumptions of Theorem
\ref{thm:quadrature} are satisfied. 
\end{proof}
A few remarks are in order. 
\begin{rem}
(i) In the literature it is usually assumed that the \mz\ inequalities
 hold with uniform constants. In this case  there are $a$ and $b$ such that 
\begin{equation*}
0<a\leq a_{n}\leq b_{n}\leq b<\infty,
\end{equation*}
and the fraction $\frac{b_{n}}{a_{n}}\leq \frac{b}{a}$ dissolves into
the hidden  constant of $\lesssim$.

(ii) The weights in the \mz\ inequalities  $\tau(x)$ for $x\in X_n$
are nonnegative by assumption. In general, however, the quadrature
weights  $\omega(x)$ defined by \eqref{eq:omega from tau} could possibly
be negative. 
However, due to continuity arguments, if $X_n$ are distorted Gaussian nodes, then $\{\omega(x)\}_{x\in X_n}$ are still positive for sufficiently small distortions. 

(iii) The case of Gauss quadrature can be viewed as a special case of
Theorem~\ref{tm:H with MZ}. In this case the nodes for $X_{n+1}$ are  the
zeros of $h_{n+1}$. 
Endowed with the weights
\begin{equation}\label{eq:tau general via Gauss}
\tau(x)=\Lambda_n(x),\quad x\in X_{n+1},
\end{equation}
they satisfy the \mz\ inequalities with equality $a_n=b_n=1$.
Indeed, if $f=pW \in \Pi _n$ with a polynomial  of degree $n$, then
$|p|^2W \in \Pi _{2n}$ and the exactness of $X_{n+1} $ on $\Pi _{2n}$
with $\omega (x) = \Lambda _n(x) W(x)$ means that
$$
\sum _{x\in X_{n+1}} |p(x)|^2 W(x) \omega (x) = \int _{\bR } |p(x)|^2
  W(x)^2 \, \mathrm{d}x \, .
  $$
 Written in terms of $f\in \Pi _n$, this is the \mz\ inequality
  $$
\sum _{x\in X_{n+1}} |f(x)|^2 \Lambda _n(x)W(x)^{-2} = \int _{\bR } |f(x)|^2
  \, \mathrm{d}x \, ,
  $$
  with constants $a_n=b_n=1$. 
Therefore,
$S_{n}$ is the identity operator, and the quadrature weights
$\{\omega(x)\}_{x\in X_{n+1}}$ defined by  \eqref{eq:omega from tau}
coincide with the Christoffel weights in \eqref{eq:omega through
  Christoffel}. Thus, Theorem \ref{tm:H with MZ} contains Gauss
quadrature as a special case, but 
it yields slightly weaker  bounds than Theorem
\ref{thm:alles}. 

(iv) The proof of Theorem \ref{tm:H with MZ} reveals two key
ingredients for deriving quadrature bounds. We need
\begin{itemize}
\item[-] upper and lower \mz\ inequalities, i.e., sampling theorems for
  the finite-dimensional subspaces $\Pi _n$, and 
\item[-] upper and lower bounds on Christoffel functions.
\end{itemize}
Lower \mz\ inequalities are used in the abstract Theorem
\ref{thm:quadrature} to reduce the problem of finding suitable bounds
on $\phi^s$ and $\phi^{p}_q$. The lower Christoffel bounds are used in
the proof of Proposition \ref{prop:new one again} to
ensure~\eqref{eq:phi 01} and \eqref{eq:phi 02} hold. Then upper 
\mz\ inequalities in \eqref{eq:upper MZ used here} combined with upper
Christoffel bounds in \eqref{eq:christ for hermite} are used to bound
the sum of quadrature weights in \eqref{eq:sum of tau weights}.

(v) Optimality: How sharp are the estimates of
Theorems~\ref{thm:alles} and \ref{tm:H with MZ}?

For the weight function $W(x) = e^{-\pi
  x^2}$ ($\alpha =2$) and the Sobolov spaces $\mathbb{H}^s$ it was
shown in \cite{Dick:2018aa} that  
no choice of points and weights can do better than $\cO (n^{-2s})$. This bound is achieved by points derived from digital nets up to some logarithmic factor (at least when $n$ is a power of a prime). 

For quadrature in $\mathbb{E}^{p}_q$ with $\alpha=2$ and $p\geq 1$,
according to \cite{Irrgeher:2015ab}, no choice of points and weights
can do better than $\mathrm{e}^{-q(2n)^p} 4^{-n}n^{-2}$.

In view of the generality of our techniques, our results come fairly
close to the  results that are optimal in special cases. 

(vi) Theorem \ref{tm:H with MZ} holds for a more general class of
Borel measures instead of just point measures. We assume that $\nu _n$
is a sequence of Borel measures that define the semi-inner products 
\begin{equation*}
\langle f,g\rangle_{n} = \int_{\mathbb{R}} f(x)
g(x)\mathrm{d}\nu_{n}(x) \, ,
\end{equation*}
such that (a) the \mz\ inequalities  \eqref{eq:MZ new} are satisfied
and (b) the support condition \begin{equation*}
\max _{x\in\supp(\nu_{n})} |x|\leq m_{n}(1+Ln^{-\frac{2}{3}})
\end{equation*}
holds for some $L>0$. Then the conclusion of Theorem \ref{tm:H with
  MZ} holds for the quadrature rule $\int_{-\infty}^\infty
f(x)(S_{n}^{-1}W)(x)\mathrm{d}\nu_{n}(x)$.  
\end{rem}

\textbf{Numerics.} 
Fix $\alpha =2$ and $W(x) = e^{-\pi x^2}$ and let $X_{n+1}$ be  the Gaussian nodes  with weights $\{\tau(x)\}_{x\in
  X_{n+1}}$ in \eqref{eq:tau general via Gauss}.  We
now choose perturbed  nodes $\tilde{X}_{n+1}$ by  
\begin{equation}\label{eq:distorted Gaussian nodes}
\tilde{x} := x+\epsilon(x),\qquad x\in X_{n+1}.
\end{equation}
If the perturbations $\epsilon(x)$ are sufficiently small, then the 
$\tilde{X}_{n+1} $ still satisfy the \mz\ inequalities \eqref{eq:MZ new}
with the same weights $\{\tau(x)\}_{x\in X_{n+1}}$, e.g., by~\cite{Christensen:2003aa}.
 According to \eqref{eq:omega from tau}, we define
\begin{equation*}
\omega(\tilde{x})=\tau(x)(S_{n}^{-1}W)(\tilde{x}),\quad x\in X_{n+1}.
\end{equation*}
The quadrature error in $\mathbb{M}^s_{\mathrm{e}}$ is determined analogously to Section \ref{sec:Me1}, so that the proof of Theorem \ref{thm:alles} leads to 
\begin{multline}\label{eq:wce Me1 distorted}
\sup_{\substack{f\in \mathbb{M}^s_{\mathrm{e}}, \\ \|f\|_{\mathbb{M}^s_{\mathrm{e}}}\leq 1}} \left| \int\limits_{\,-\infty}^\infty f(x) W(x) \mathrm{d}x- \sum_{\tilde{x}\in \tilde{X}_{n+1}}\omega(\tilde{x}) f(\tilde{x})\right|^2 \\
\asymp\sum_{\tilde{x},\tilde{y}\in \tilde{X}_{n+1}} \omega(\tilde{x})\omega(\tilde{y}) \sum_{k=n+1}^\infty \mathrm{e}^{-\frac{s}{\sqrt{\pi}}\sqrt{k}} h_{k}(\tilde{x})h_{k}(\tilde{y}).
\end{multline}
For the quadrature error in $\mathbb{M}^s$, we obtain
\begin{multline}\label{eq:wce M distorted}
\sup_{\substack{f\in \mathbb{M}^s, \\ \|f\|_{\mathbb{M}^s}\leq 1}} \left| \int\limits_{\,-\infty}^\infty f(x) W(x) \mathrm{d}x- \sum_{\tilde{x}\in \tilde{X}_{n+1}}\omega(\tilde{x}) f(\tilde{x})\right|^2 \\
\asymp\sum_{\tilde{x},\tilde{y}\in \tilde{X}_{n+1}}  \omega(\tilde{x})\omega(\tilde{y}) \sum_{k=n+1}^\infty (1+k)^{-s} h_{k}(\tilde{x})h_{k}(\tilde{y}).
\end{multline}
For numerical experiments  in Figure \ref{fig:mehler 3} we
have  truncated the infinite series in \eqref{eq:wce Me1 distorted}
and \eqref{eq:wce M distorted} and then plotted the worst case error
as a function of $n$ and $\sqrt{n}$ respectively. For $\mathbb{M}^{\frac{1}{2}}_{\mathrm{e}}$ in particular, we observed some stability issues. For $\mathbb{M}^s$ nonetheless, the results suggest that the actual decay rate is $n^{-s}$ and the additional $\frac{4}{3}$ in Theorem \ref{tm:H with MZ} is simply an artifact of our proof. 
\begin{figure}
\subfigure[$\mathbb{M}^{\frac{1}{2}}_{\mathrm{e}}$, slope $-0.26$ ]{\includegraphics[width=.45\textwidth]{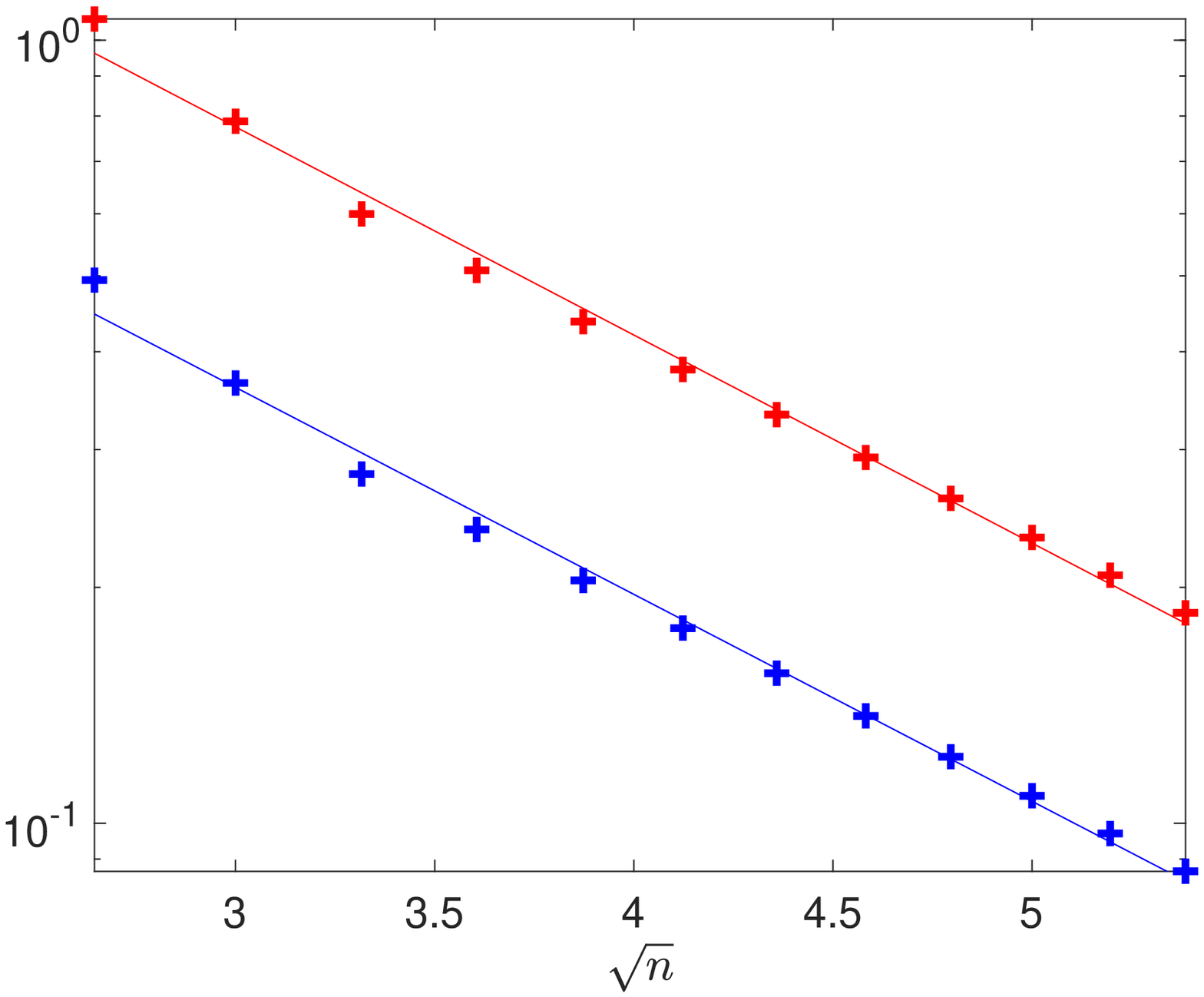}}

\subfigure[$\mathbb{M}^1$, slope $-1$ ]{\includegraphics[width=.45\textwidth]{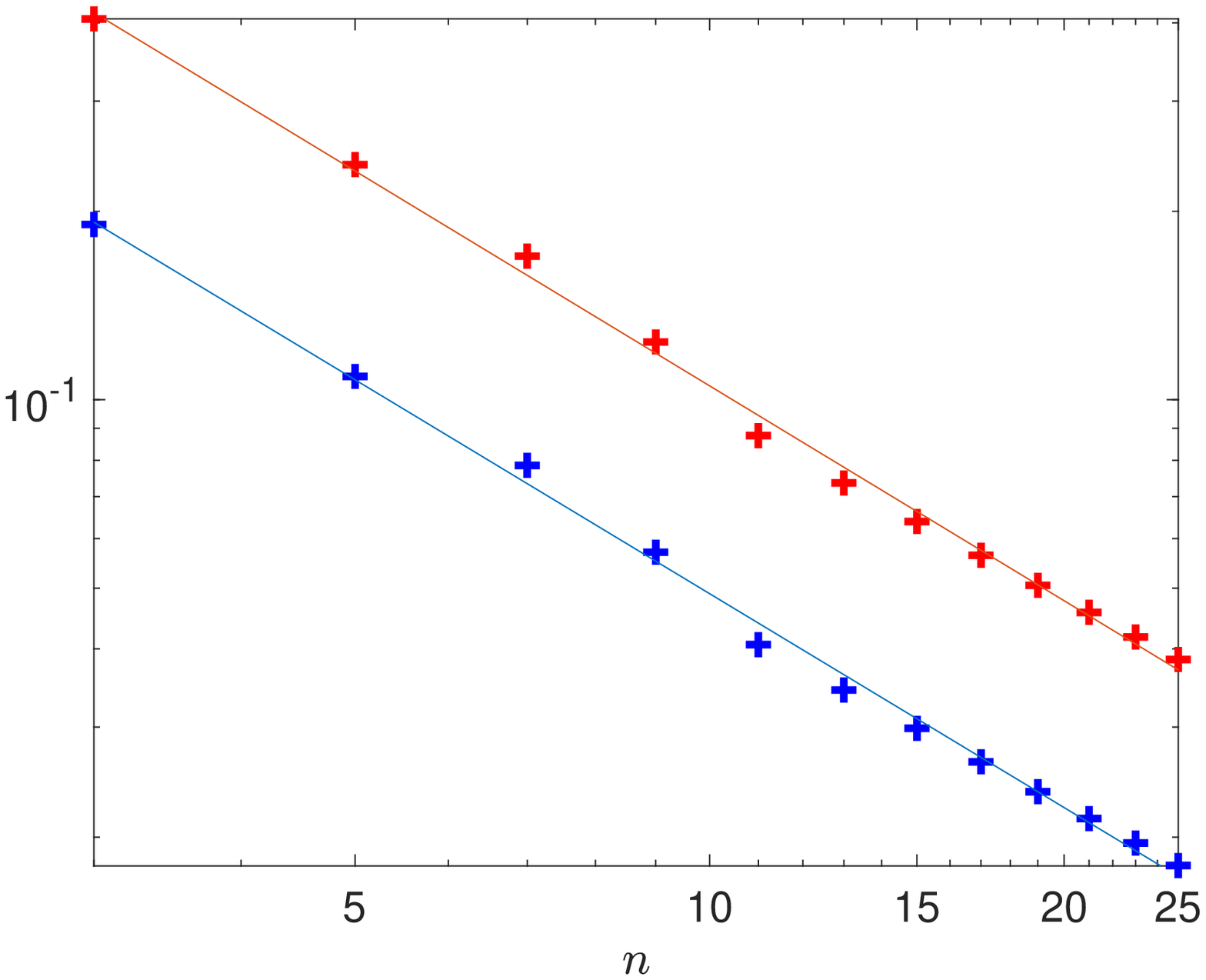}}
\subfigure[$\mathbb{M}^{\frac{2}{3}}$, slope $-.66$ ]{\includegraphics[width=.45\textwidth]{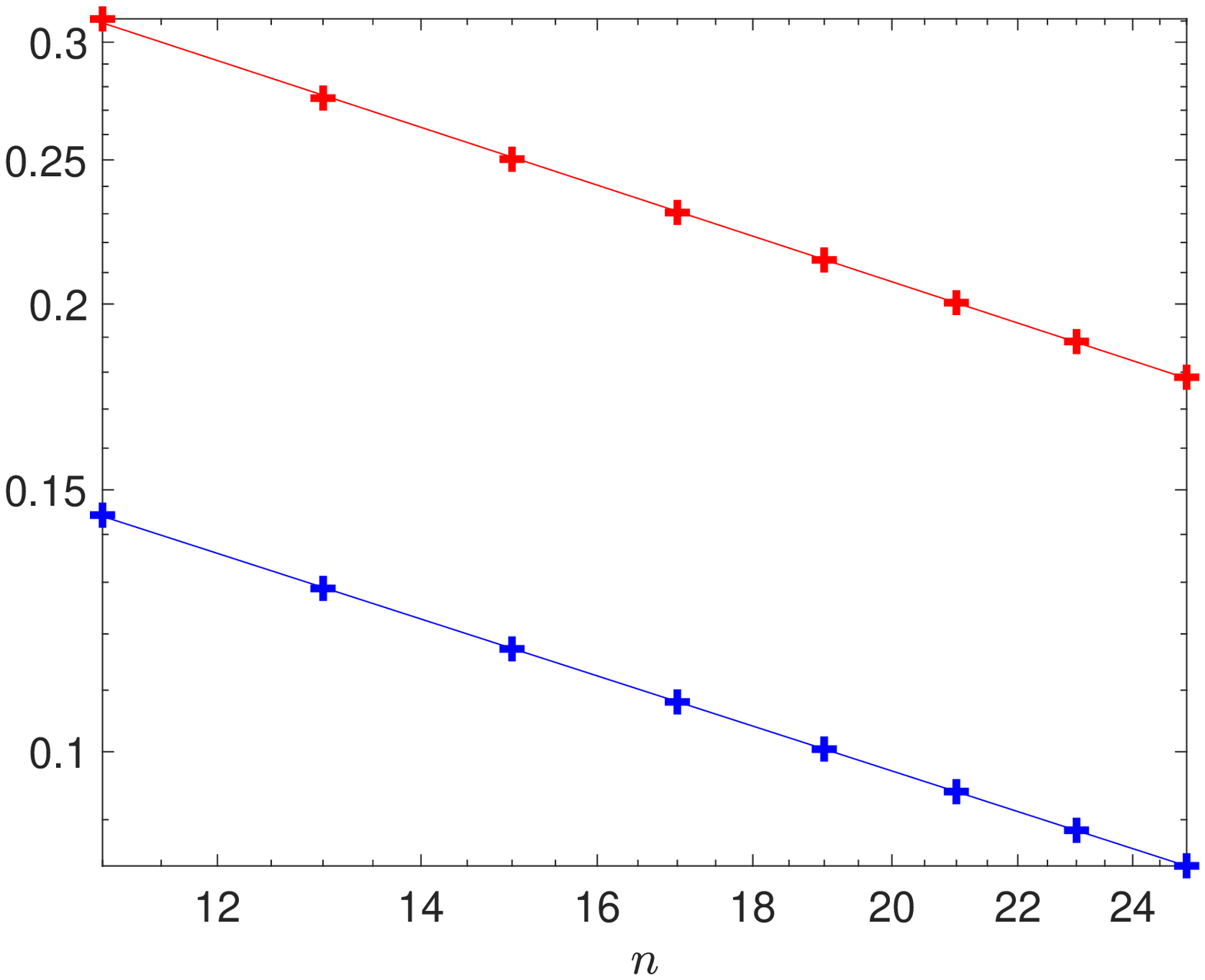}}
\caption{Logarithmic plot of the quadrature errors \eqref{eq:wce Me1 distorted} and \eqref{eq:wce M distorted} for distorted Gaussian nodes against $\sqrt{n}$ and $n$, respectively. 
 (blue) $|\epsilon(x)|=\frac{1}{10}$, (red) $|\epsilon(x)|=\frac{1}{5}$ in \eqref{eq:distorted Gaussian nodes}.}\label{fig:mehler 3}
\end{figure}

\appendix

\section{Multivariate setting}
Using tensor products the results of the previous  sections can be
extended to several variables. 
Fix  $\alpha>1$, write $x=(x_1, \dots , x_d) \in \rd $, and define the
tensor weight $W_d$ on $\rd $ as 
\begin{equation*}
W_d(x)=W(x_1)\cdots W(x_d),\qquad x\in\mathbb{R}^d, 
\end{equation*}
where $W(x_i)=e^{-\pi |x_i|^{\alpha}}$. 
The tensor product of the generalized Hermite functions is denoted by
\begin{equation*}
h_{k}(x)=h_{k_1}(x_1)\cdots h_{k_d}(x_d),\qquad x\in\mathbb{R}^d,\quad k\in\mathbb{N}^d,
\end{equation*}
and we still keep the notation $\hat{f}_{k}=\langle
f,h_{k}\rangle_{L^2(\mathbb{R}^d)}$. Then $\{ h_k : k\in \bN ^d\} $ is
an orthonormal basis for $\lrd $.

 For $s\geq 0$ and $p,q> 0$, the  multivariate modulation spaces
 $\mathbb{H}^s(\mathbb{R}^d)$ and $\mathbb{E}^{p}_q(\mathbb{R}^d)$  are defined
 by the norms 
\begin{align*}
 \|f\|^2_{\mathbb{H}^s(\mathbb{R}^d)}&=\sum_{k\in\mathbb{N}^d} (1+k_1)^s\cdots(1+k_d)^s |\hat{f}_{k}|^2,\\
 \|f\|^2_{\mathbb{E}^{p}_q(\mathbb{R}^d)}& =  \sum_{k\in\mathbb{N}^d}^\infty
  \mathrm{e}^{-q(k_1^p+\ldots+k_d^p)}  |\hat{f}_{k}|^2 \, .
\end{align*}

We choose the Cartesian product of the Gaussian quadrature nodes
$X_n$, i.e., $X_{n,d} = X_n \times \dots \times  X_n = \{x \in \rd:
x_j\in X_n\}$ and the quadrature weights $\omega _d(x) = \prod
_{j=1}^d \omega (x_j), x_j\in X_n$, where $\omega $ is taken from
\eqref{eq:omega through Christoffel}. Then   
for $s>1-\frac{1}{\alpha}$ and $p,q>0$, one derives
\begin{align}
\sup_{\substack{f\in \mathbb{H}^s(\mathbb{R}^d)\\ \|f\|_{\mathbb{H}^s}\leq 1}}\left|\int_{\mathbb{R}^d} f(x)W(x)\mathrm{d}x-\sum_{x\in X_{n}} \omega(x) f(x)\right|^2   &\lesssim   n^{-s+1-\frac{1}{\alpha}},\label{eq:11011}\\
\sup_{\substack{f\in \mathbb{E}^{p}_q(\mathbb{R}^d)\\ \|f\|_{\mathbb{E}^{p}_q}\leq 1}}\left|\int_{\mathbb{R}^d} f(x) W_\alpha(x)\mathrm{d}x-\sum_{x\in X_n}\omega(x) f(x) \right|^2 &\lesssim \mathrm{e}^{-q(2n)^p} \cdot n^{\frac{1}{3}-\frac{1}{\alpha}}.\label{eq:11012}
\end{align}
Note that $X_{n,d}$ consists of $N=(n+1)^d$ points, so that the
asymptotic error is actually $\cO (N^{-(\frac{s}{d}+\frac{1}{d}-\frac{1}{\alpha d}})$ in dimension
$d$ for $\mathbb{H}^s(\mathbb{R}^d)$.  We may also consider different $\alpha_1,\ldots,\alpha_d$ for each dimension in the tensor product. Then \eqref{eq:11011} and \eqref{eq:11012} hold for $\alpha$ replaced by $\alpha_{\max}=\max(\alpha_1,\ldots,\alpha_d)$ for all $s>1-\frac{1}{\alpha_{\max}}$.

For the normalized multivariate Gaussian window function 
\begin{equation*}
\varphi_d(x)=2^{\frac{d}{4}}e^{-\pi \|x\|^2},\qquad x\in\mathbb{R}^d,
\end{equation*}
the short-time Fourier transform of $f\in L^2(\mathbb{R}^d)$ 
is 
\begin{equation*}
V_{\varphi_d} f(x,\xi) = \int_{\mathbb{R}^d} f(t) \varphi_d(t-x) e^{- 2 \pi \mathrm{i} \langle \xi, t\rangle}
dt,\qquad x,\xi\in\mathbb{R}^d.
\end{equation*}
For $f\in L^2(\mathbb{R}^d)$, the modulation norms
\begin{align*}
\|f\|^2_{\mathbb{M}^s(\mathbb{R}^d)} &= \int _{\bR ^{2d}} |V_{\varphi_d} f(z)|^2  (1+|z_1|^2)^s\cdots(1+|z_d|^2)^s\, \mathrm{d}z\\
\|f\|^2_{\mathbb{M}^s_{\mathrm{e}}(\mathbb{R}^d)} &= \int _{\bR ^{2d}} |V_{\varphi_d} f(z)|^2 \, \mathrm{e}^{s\|z\|_1}\mathrm{d}z\\
\|f\|^2_{\mathbb{M}^s_{\mathrm{e}^2}(\mathbb{R}^d)} &= \int _{\bR ^{2d}} |V_{\varphi_d} f(z)|^2 \, \mathrm{e}^{s\|z\|_2}\mathrm{d}z,
\end{align*}
where $\|\cdot\|_1$ and $\|\cdot\|_2$ denote the standard $1$ and $2$-norm on $\mathbb{C}^d$, 
lead to the respective multivariate modulation spaces $\mathbb{M}^s(\mathbb{R}^d)$, $\mathbb{M}^s_{\mathrm{e}}(\mathbb{R}^d)$, and $\mathbb{M}^s_{\mathrm{e}^2}(\mathbb{R}^d)$. Since 
\begin{equation*}
V_{\varphi_d} h_k = V_{\varphi_1} h_{k_1} \otimes \ldots \otimes V_{\varphi_1} h_{k_d},\qquad k\in\mathbb{N}^d,
\end{equation*}
they carry the tensor structure, and one may deduce, for $\alpha=2$, 
\begin{align*}
\mathbb{M}^s(\mathbb{R}^d)&=\mathbb{H}^{s}(\mathbb{R}^d),\qquad s\geq 0,\\
\mathbb{M}^s_{\mathrm{e}}(\mathbb{R}^d) & =\mathbb{E}^{\frac{1}{2}}_q(\mathbb{R}^d),\qquad q=\frac{s}{\sqrt{\pi}},\quad s\geq 0,\\
\mathbb{M}^s_{\mathrm{e}^2}(\mathbb{R}^d) & =\mathbb{E}^{1}_q(\mathbb{R}^d),\qquad q=\ln(\tfrac{\pi}{\pi-s}),\quad \pi>s\geq 0.
\end{align*}
with equivalent norms.

% \bibliographystyle{abbrv}

%  \bibliography{biblio_ehler2}

\end{document}